\theoremstyle{definition}
\newtheorem{rem}{Remark}
\theoremstyle{plain}
\let\DOTSI\relax
\def\RIfM@{\relax\ifmmode}
\def\FN@{\futurelet\next}
\def\iint{\DOTSI\intno@\tw@\FN@\ints@}
\def\iiint{\DOTSI\intno@\thr@@\FN@\ints@}
\def\iiiint{\DOTSI\intno@4 \FN@\ints@}
\def\idotsint{\DOTSI\intno@\z@\FN@\ints@}
\def\ints@{\findlimits@\ints@@}
\newif\iflimtoken@
\newif\iflimits@
\def\findlimits@{\limtoken@true\ifx\next\limits\limits@true
 \else\ifx\next\nolimits\limits@false\else
 \limtoken@false\ifx\ilimits@\nolimits\limits@false\else
 \ifinner\limits@false\else\limits@true\fi\fi\fi\fi}
\def\multint@{\int\ifnum\intno@=\z@\intdots@                                
 \else\intkern@\fi                                                          
 \ifnum\intno@>\tw@\int\intkern@\fi                                         
 \ifnum\intno@>\thr@@\int\intkern@\fi                                       
 \int}                                                                      
\def\multintlimits@{\intop\ifnum\intno@=\z@\intdots@\else\intkern@\fi
 \ifnum\intno@>\tw@\intop\intkern@\fi
 \ifnum\intno@>\thr@@\intop\intkern@\fi\intop}
\def\intic@{\mathchoice{\hskip.5em}{\hskip.4em}{\hskip.4em}{\hskip.4em}}
\def\negintic@{\mathchoice
 {\hskip-.5em}{\hskip-.4em}{\hskip-.4em}{\hskip-.4em}}
\def\ints@@{\iflimtoken@                                                    
 \def\ints@@@{\iflimits@\negintic@\mathop{\intic@\multintlimits@}\limits    
  \else\multint@\nolimits\fi                                                
  \eat@}                                                                    
 \else                                                                      
 \def\ints@@@{\iflimits@\negintic@
  \mathop{\intic@\multintlimits@}\limits\else
  \multint@\nolimits\fi}\fi\ints@@@}
\def\intkern@{\mathchoice{\!\!\!}{\!\!}{\!\!}{\!\!}}
\def\plaincdots@{\mathinner{\cdotp\cdotp\cdotp}}
\def\intdots@{\mathchoice{\plaincdots@}
 {{\cdotp}\mkern1.5mu{\cdotp}\mkern1.5mu{\cdotp}}
 {{\cdotp}\mkern1mu{\cdotp}\mkern1mu{\cdotp}}
 {{\cdotp}\mkern1mu{\cdotp}\mkern1mu{\cdotp}}}
\newif\iffirstchoice@
\def\textfonti{\the\textfont\@ne}
\def\textfontii{\the\textfont\tw@}
\def\text{\RIfM@\expandafter\text@\else\expandafter\text@@\fi}
\def\text@@#1{\leavevmode\hbox{#1}}
\def\text@#1{\mathchoice
 {\hbox{\everymath{\displaystyle}\def\textfonti{\the\textfont\@ne}%
  \def\textfontii{\the\textfont\tw@}\textdef@@ T#1}}
 {\hbox{\firstchoice@false
  \everymath{\textstyle}\def\textfonti{\the\textfont\@ne}%
  \def\textfontii{\the\textfont\tw@}\textdef@@ T#1}}
 {\hbox{\firstchoice@false
  \everymath{\scriptstyle}\def\textfonti{\the\scriptfont\@ne}%
  \def\textfontii{\the\scriptfont\tw@}\textdef@@ S\rm#1}}
 {\hbox{\firstchoice@false
  \everymath{\scriptscriptstyle}\def\textfonti
  {\the\scriptscriptfont\@ne}%
  \def\textfontii{\the\scriptscriptfont\tw@}\textdef@@ s\rm#1}}}
\def\textdef@@#1{\textdef@#1\rm\textdef@#1\bf\textdef@#1\sl\textdef@#1\it}
\def\DN@{\def\next@}
\def\eat@#1{}
\def\textdef@#1#2{%
 \DN@{\csname\expandafter\eat@\string#2fam\endcsname}%
 \if S#1\edef#2{\the\scriptfont\next@\relax}%
 \else\if s#1\edef#2{\the\scriptscriptfont\next@\relax}%
 \else\edef#2{\the\textfont\next@\relax}\fi\fi}
\def\Let@{\relax\iffalse{\fi\let\\=\cr\iffalse}\fi}
\def\vspace@{\def\vspace##1{\crcr\noalign{\vskip##1\relax}}}
\def\multilimits@{\bgroup\vspace@\Let@
 \baselineskip\fontdimen10 \scriptfont\tw@
 \advance\baselineskip\fontdimen12 \scriptfont\tw@
 \lineskip\thr@@\fontdimen8 \scriptfont\thr@@
 \lineskiplimit\lineskip
 \vbox\bgroup\ialign\bgroup\hfil$\m@th\scriptstyle{##}$\hfil\crcr}
\def\Sb{_\multilimits@}
\def\endSb{\crcr\egroup\egroup\egroup}
\def\Sp{^\multilimits@}
\newdimen\ex@
\def\rightarrowfill@#1{$#1\m@th\mathord-\mkern-6mu\cleaders
 \hbox{$#1\mkern-2mu\mathord-\mkern-2mu$}\hfill
 \mkern-6mu\mathord\rightarrow$}
\def\leftarrowfill@#1{$#1\m@th\mathord\leftarrow\mkern-6mu\cleaders
 \hbox{$#1\mkern-2mu\mathord-\mkern-2mu$}\hfill\mkern-6mu\mathord-$}
\def\leftrightarrowfill@#1{$#1\m@th\mathord\leftarrow\mkern-6mu\cleaders
 \hbox{$#1\mkern-2mu\mathord-\mkern-2mu$}\hfill
 \mkern-6mu\mathord\rightarrow$}
\def\overrightarrow{\mathpalette\overrightarrow@}
\def\overrightarrow@#1#2{\vbox{\ialign{##\crcr\rightarrowfill@#1\crcr
 \noalign{\kern-\ex@\nointerlineskip}$\m@th\hfil#1#2\hfil$\crcr}}}
\def\overleftarrow{\mathpalette\overleftarrow@}
\def\overleftarrow@#1#2{\vbox{\ialign{##\crcr\leftarrowfill@#1\crcr
 \noalign{\kern-\ex@\nointerlineskip}$\m@th\hfil#1#2\hfil$\crcr}}}
\def\overleftrightarrow{\mathpalette\overleftrightarrow@}
\def\overleftrightarrow@#1#2{\vbox{\ialign{##\crcr\leftrightarrowfill@#1\crcr
 \noalign{\kern-\ex@\nointerlineskip}$\m@th\hfil#1#2\hfil$\crcr}}}
\def\underrightarrow{\mathpalette\underrightarrow@}
\def\underrightarrow@#1#2{\vtop{\ialign{##\crcr$\m@th\hfil#1#2\hfil$\crcr
 \noalign{\nointerlineskip}\rightarrowfill@#1\crcr}}}
\def\underleftarrow{\mathpalette\underleftarrow@}
\def\underleftarrow@#1#2{\vtop{\ialign{##\crcr$\m@th\hfil#1#2\hfil$\crcr
 \noalign{\nointerlineskip}\leftarrowfill@#1\crcr}}}
\def\underleftrightarrow{\mathpalette\underleftrightarrow@}
\def\underleftrightarrow@#1#2{\vtop{\ialign{##\crcr$\m@th\hfil#1#2\hfil$\crcr
 \noalign{\nointerlineskip}\leftrightarrowfill@#1\crcr}}}
\def\frac#1#2{{#1 \over #2}}
\def\dfrac#1#2{{\displaystyle {#1 \over #2}}}
\def\GRAPHICSPS#1{%
\ifnum\GRAPHICSTYPE=1 language "PS", include "#1"\else%
ps: #1\fi}
\def\graffile#1#2#3#4{\leavevmode\raise -#4 \hbox{%
\raise #3 \hbox{\rule{0.003in}{0.003in}\special{#1}}}%
{\raise -#4 \hbox to #2 {\vrule height#3 width0in depth0in\hfil}}%
}
\def\draftbox#1#2#3#4{\leavevmode\raise -#4 \hbox{\frame{\rlap{\protect\tiny #1}%
\hbox to #2{\vrule height#3 width0in depth0in\hfil}}}}
\def\GRAPHIC#1#2#3#4#5{\ifnum\draft=1 \draftbox{#2}{#3}{#4}{#5}\else%
\graffile{#1}{#3}{#4}{#5}\fi}
\def\addtoLaTeXparams#1{\edef\LaTeXparams{\LaTeXparams #1}}
\def\doFRAMEparams#1{\readFRAMEparams#1\end}
\def\readFRAMEparams#1{%
\ifx#1\end%
\let\next=\relax%
\else%
\ifx#1i%
\dispkind=0%
\fi%
\ifx#1d%
\dispkind=1%
\fi%
\ifx#1f%
\dispkind=2%
\fi%
\ifx#1t%
\addtoLaTeXparams{t}%
\fi%
\ifx#1b%
\addtoLaTeXparams{b}%
\fi%
\ifx#1p%
\addtoLaTeXparams{p}%
\fi%
\ifx#1h%
\addtoLaTeXparams{h}%
\fi%
\let\next=\readFRAMEparams%
\fi%
\next%
}
\def\IFRAME#1#2#3#4#5{\GRAPHIC{#5}{#4}{#1}{#2}{#3}}
\def\DFRAME#1#2#3#4{
  \begin{center}
    \GRAPHIC{#4}{#3}{#1}{#2}{0in} 
  \end{center}
}
\def\FFRAME#1#2#3#4#5#6#7{
  \begin{figure}[#1]
    \begin{center}
      \GRAPHIC{#7}{#6}{#2}{#3}{0in}
    \end{center}
    \caption{\label{#5}#4}
  \end{figure}
}
\def\FRAME#1#2#3#4#5#6#7#8{%
\def\LaTeXparams{}%
\dispkind=0%
\def\LaTeXparams{}%
\doFRAMEparams{#1}%
\ifnum\dispkind=0%
\IFRAME{#2}{#3}{#4}{#7}{#8}%
\else
  \ifnum\dispkind=1
    \DFRAME{#2}{#3}{#7}{#8}
  \else
    \ifnum\dispkind=2
      \FFRAME{\LaTeXparams}{#2}{#3}{#5}{#6}{#7}{#8}
    \fi
  \fi
\fi
}
\long\def\QQQ#1#2{}
\def\QTP#1{}
\long\def\QQA#1#2{}
\def\EXPAND#1[#2]#3{}
\def\NOEXPAND#1[#2]#3{}
\def\LaTeXparent#1{}
\def\input gnuindex.sty\makeindex{\input gnuindex.sty\makeindex}
\def\initial#1{\bigbreak{\raggedright\large\bf #1}\kern 2pt\penalty3000}
\newdimen\theight
\def \Column{%
             \vadjust{\setbox0=\hbox{\scriptsize\quad\quad tcol}%
             \theight=\ht0
             \advance\theight by \dp0    \advance\theight by \lineskip
             \kern -\theight \vbox to \theight{\rightline{\rlap{\box0}}%
             \vss}%
             }}%
\def\qed{\ifhmode\unskip\nobreak\fi\ifmmode\ifinner\else\hskip5\p@\fi\fi
 \hbox{\hskip5\p@\vrule width4\p@ height6\p@ depth1.5\p@\hskip\p@}}
\def\tilde{\widetilde}
\def\N{\mathbb N}
\def\L{{\mathcal L}}
\def\H{{\mathcal H}}
\def\I{{\mathcal I}}
\def\K{{\mathcal K}}
\begin{document}

\lmnsVolume{33}{2}{2013}{53}{doi:10.14510/lm-ns.v33i2.xx} 
\lmnsTitle[Conditional expectations, traces. angles between
spaces]{Conditional expectations, traces, angles between spaces and
Representations of the Hecke algebras} 
\lmnsAuthors[Florin R\u adulescu]{Florin R\u adulescu${}^*$} %
\lmnsDedication{Dedicated to Professor  C.T. Ionescu Tulcea, on the occasion
of his 90th anniversary}

\lmnsAbstract{In this paper we extend the results in [Ra] on the representation of the Hecke algebra, determined by the matrix coefficients of a projective, unitary representation, in the discrete series of representations  of the ambient group, to a more general, vector valued case.
This method is used to analyze the traces of the Hecke operators.

We construct  representations of the Hecke algebra  of a group $G$, relative to an almost normal subgroup $\Gamma$,  into the von Neumann algebra of the group $G$, tensor matrices. The representations we obtain  are a lifting  of  the Hecke operators to this larger algebra. By summing up the coefficients  of the terms in the representation one obtains the classical Hecke operators.

 These representations were used in the scalar case in [Ra], to find an alternative representation of the Hecke operators on Maass forms, and hence to reformulate the Ramanujan Petersson conjectures as a problem on the angle (see e.g. A. Connes's paper [Co] on the generalization of CKM matrix) between two subalgebras  of the von Neumann algebra of the group $G$: the image of the representation of the Hecke algebra and the algebra of the almost normal subgroup.}
\lmnsKeywords{Conditional expectations, Traces, Hecke Algebra,
Ramanujan-Petersson conjecture} \lmnsMSC{11F72, 46L65} 
\lmnsContact{Florin R\u
adulescu}{Dipartimento di Matematica, Universita degli Studi di Roma ``Tor
Vergata'', Italy \\ and \\Institutul de Matematic\u{a} ''Simion Stoilow'',
Academia Rom\^{a}n\u{a}, 014700 Bucharest,
Romania}{radulesc@mat.uniroma2.it} \lmnsMaketitle

\renewcommand{\thefootnote}{} \footnotetext{${}^{\ast }$ 
Supported in part by
PRIN-MIUR, and by PN-II-ID-PCE-2012-4-0201}

\section{Introduction}

Let $G$ be a countable discrete group and $\Gamma \subseteq G$ be an almost
subgroup. Assume $\pi$ is a unitary representation of $G$ into the unitary
group of a separable Hilbert space $H$
and  assume that $\pi|_{\Gamma}$ is a multiple of the left regular
representation $\lambda_{\Gamma}$ of $\Gamma$ on $l^2(\Gamma)$. For
simplicity, throughout the paper we assume that the groups $\Gamma$ and $G$
have infinite conjugacy classes, and hence  it follows  that the associated von Neumann
algebras are factors, and thus have a unique trace denoted by $\tau$. The
Murray von Neumann theory of dimension (see e.g. [Ta], [GHJ]) associates to
the representation $\pi$ of the group $\Gamma$, a continuous dimension number  $t = 
\mathrm{dim}_{\pi(\Gamma)^{\prime\prime}}H = \mathrm{dim}_{\Gamma}H \in (0,
\infty]$. Here, by $\pi(\Gamma)^{\prime\prime}$ we denote, as customary in
von Neumann algebras, the von Neumann algebra generated by $\pi(\Gamma)
\subseteq B(H)$. In the paper [Ra] we treated the case $\mathrm{dim}%
_{\Gamma}H = 1$. (We are deeply indebted to H. Moscovici for suggesting to
investigate the general case of arbitrary dimension). If $t$ is an integer
or $\infty$ on infinity, the hypothesis on the representation of $G$
corresponds to the existence of a Hilbert subspace $L \subseteq H$, such
that $H \cong l^2(\Gamma) \otimes L$, and $\pi|_\Gamma \cong \lambda_{\Gamma} \otimes 
\mathrm{Id}_{L}$.

To this data we associate a representation $S$ of the algebra of Hecke
operators into the reduced von Neumann algebra of the right regular
representation of $G$, $R(G)_t$ ([MvN]). (If $t$ is an integer, the algebra $%
R(G)_t$ is $R(G) \otimes B(K)$, where $K$ is any Hilbert space of finite dimension $%
\mathrm{dim}_{\mathbb{C}}K = t$), otherwise $R(G)_t = {p}(R(G) \otimes
B(L))p $, where $p$ is a projection in $\mathcal{R}(\Gamma) \otimes B(L)$ of
trace $\tau(p)= t$, and $L$ is an arbitrary infinite dimensional Hilbert
space.
In the case $\mathrm{dim}_{\mathbb{C}}L = \infty$, the representation takes
values into the formal ring of infinite series in $G$ with coefficients in $%
B(L)$.

Let $\mathcal{H}_0 = \mathbb{C}(\Gamma\setminus G/\Gamma)$ be the $\mathbb{C%
}$ - algebra of double cosets of $\Gamma$ in $G$. Let $\mathcal{H}$ be the reduced von Neumann
- Hecke algebra of $\Gamma \subseteq G$, acting on $l^2(\Gamma/G)$. Recall that this the the weak
operator topology closure of $\mathcal{H}_0$, viewed as a subalgebra of $%
B(l^2(\Gamma/G))$, (see [BC] ).

We generalize the results in [Ra], by proving that also in this more general
case,  the angle (see Theorem \ref{t}) between the algebra $S(\mathcal{H}%
)\subseteq R(G)_{t}$ and the algebra $R(\Gamma )_{t}$, determines the
Ramanujan - Petersson behavior of the representation $\Psi $ of the
(algebraic) Hecke algebra $\mathcal{H}_{0}=\mathbb{C}(\Gamma \setminus
G/\Gamma )$ (the $\mathbb{C}$ - algebra of double cosets) on $\pi (\Gamma
)^{\prime }\subseteq B(H)$. The representation $\Psi$ is simply the action of the Hecke algebra on $%
\Gamma$ invariant elements in $B(H)$. The space of the $\Gamma$- invariant
elements is exactly the algebra $\pi(\Gamma)^{\prime}$. Here $G$ acts on $B(H)$ by $\mathrm{Ad}\pi (g),$ 
$g\in G$ and $\pi (\Gamma )^{\prime }$ is the commutant algebra. By the
Murray von Neumann dimension theory the commutant algebra is isomorphic to $%
R(G)_{t}$.  

As proved in [Ra] an essential ingredient to prove that the
representations of the Hecke algebra may be used to construct Hecke
operators on $\pi (\Gamma )^{\prime }$ (which is then equivalent to the problem
of determining the action of Hecke operators on Maass forms), is the fact
that the representation of the Hecke algebra may be extended naturally to an
operator system containing the Hecke algebra (conform Definition \ref{os}).

If $\Gamma =PSL_{2}(\mathbb{Z})$, $G=PGL_{2}(\mathbb{Z}_{\frac{1}{p}})$, $%
\pi $ is the restriction to $G$ of a representation (which could also be projective) in the discrete series
of $PGL_{2}(\mathbb{R})$. Then, through Berezin
symbol map (Cf. [Be]), one recaptures from the representation $\Psi $ the
classical Hecke operators on Maass forms.

The explanation of the fact that we are able to recapture the action of
classical Hecke operators on $\Gamma$ invariant vectors is as follows:
Starting with the representation $S$ of the Hecke algebra into ${p}(R(G)
\otimes B(L))p$ one defines the densely defined, $\ast$-algebra morphism, $%
\widetilde{\varepsilon} = \varepsilon \otimes \mathrm{Id}$ from $R(G)
\otimes B(L)$ into $B(L)$ by letting $\varepsilon$ be the character of $R(G)$
(that is $\varepsilon(\sum a_g\rho_g) = \sum a_g$).

If $S(\mathcal{H}_0)$ is contained in the domain of $(\varepsilon \otimes 
\mathrm{Id})$, then $\widetilde{\varepsilon} \circ S$ is a representation of 
$S$ onto $B(L)$ (in fact $\varepsilon(p)L$). Identifying $\varepsilon(p)L$
with the $\Gamma$-invariant vectors in the representation $\pi$, we obtain in this way
the representation of the classical operators Hecke operators on $\Gamma$%
-invariant vectors.

In particular applying the above construction to the representation $\pi
\otimes \overline{\pi}$ acting on $H \otimes \overline{H}$, one obtains (as it was also explained in [Ra]) the
classical Hecke operators on the Hilbert space of $\Gamma$ - invariant
vectors in $H \otimes \overline{H}$. This latest Hilbert space is
canonically identified to the Hilbert space $L^2(\pi(\Gamma)^{\prime}, \tau)$
associated to the von Neumann algebra $\{ \pi(\Gamma) \}^{\prime}$.

We use the following construction of the Hilbert space of the associated $%
\Gamma$ - invariant vectors. We assume the representation $\pi$,  denoted in the following
by $\pi_0$, acts on the Hilbert space $H_0$. To define a Hilbert space of $%
\Gamma$ - invariant vectors we use a "scale for $\Gamma$-invariant vectors" (as it is done in the
Petersson scalar product).

Thus, we assume that $H_0$  is contained in a larger Hilbert space $ H$, and we assume that there exists  a representation  $\pi$  of $G$ into the unitary group of $H$,
 such that $\pi$ invariates $H_0$, and such that $$%
\pi(g)|_{H_0} = \pi_0(g),\   g\in G.$$

We assume $L \subseteq H$, is a subspace (which will be the scale mentioned above) such that $\pi|_{\Gamma} \cong
\lambda_{\Gamma} \otimes \mathrm{Id}_L$ (thus $H \cong l^2(\Gamma) \otimes L$%
). We define the Hilbert space of $\Gamma$-invariant vectors (as in the
Petersson scalar product) by letting $H_0^{\Gamma} = H_0^{\Gamma}(\pi, L)$
be the subspace of densely defined forms, on $H_0$, such that $\langle v, v
\rangle_{H_0^{\Gamma}} = \langle P_Lv, v \rangle$ is finite (that is such that
the densely defined map on $H$ associating to a vector $w$ in the domain the
value $\langle P_Lv, w \rangle = \langle v, P_Lw \rangle$ extends to a
continuous form on $H$).

Then $\widetilde{\varepsilon }\circ S$ is the classical Hecke operator
representation on $H_{0}^{\Gamma }(\pi ,L)$ (see Section 3). We obtain in this way another representation of the Hecke operators on $\Gamma $
invariant vectors associated to the action of $\Gamma $ on $H_{0}$. The Hecke operators are represented as
operators acting on a subspace of $L$. These representation can be used then to
analyze the traces of the Hecke operators.

Finally, in the last section we determine a precise formula for the absolute
value of the matrix coefficients of the 13-th element $\pi _{13}$ in the
discrete series of $PSL_{2}(\mathbb{R})$, when restricted to $G=PGL_{2}(%
\mathbb{Z}[\frac{1}{p}])$.

This proves that this coefficients are absolutely summable on cosets of $%
\Gamma = PSL_2(\mathbb{Z})$ (a fact that is needed to prove that the map $\varepsilon$ defined above is densely defined).

\vskip6pt

\section{The representation of the Hecke algebra}

In this section, we introduce a representation of the Hecke algebra ${\mathcal{H}}_{0}$ associated to an
almost normal subgroup $\Gamma $ of a discrete group $G$. This representation of the Hecke algebra   is canonically
associated to a projective, unitary representation $\pi $ of the larger
group $G$.

 Let $\pi : G \to B(H)$ be a projective, unitary
representation of the group $G$ into the unitary operators on the Hilbert
space $H$.
We assume that $\pi |_{\Gamma }$ is multiple representation of the left
regular representation of the group $\Gamma $ (which, as was observed in
[Ra]) implies that the associated Hecke algebra of double cosets ${\mathcal{H}}_{0}=\mathbb{C}%
(\Gamma \setminus G/\Gamma )$ is unimodular, i.e. $[\Gamma :\Gamma _{\sigma
}]=[\Gamma :\Gamma _{\sigma ^{-1}}]$, with $\Gamma _{\sigma }=\sigma \Gamma
\sigma ^{-1}\cap \Gamma $, for all $\sigma \in G$.

In addition throught this section we assume that there exists a Hilbert
subspace $L$ of $H$ such that $L$ is generating, and $\Gamma $ - wandering
for $H$ (that is $\gamma L$ is orthogonal to $L$ for all $\gamma $ in $%
\Gamma \setminus \{e\}$, and the closure of the linear span $\overline{Sp(%
\mathop{\cup}\limits_{\gamma \in \Gamma }\gamma L)}$ is equal to $H$). Here
by $e$ we denote the identity element of the group $G$.

Then $H$ is identified to the Hilbert space $l^2(\Gamma) \otimes L$, and $L$
is identified to $\{e\} \otimes L$, and the representation $\pi$ is
identified to the left regular representation $\lambda_{\Gamma}$ of $\Gamma$
on $l^2(\Gamma)$, tensor the identity of $L$.

In the sequel we denote the right regular representation of $\Gamma$
(respectively $G$) by $\rho_{\Gamma}$ (respectively $\rho_G$) on $%
l^2(\Gamma) $ (respectively $l^2(G)$). When no confusion is possible we
simply denote this by $\rho$. By $\widehat{R(G) \otimes B(L)}$ we denote
the algebra of formal series

\centerline{$\mathop{\sum}\limits_{g \in G} \rho(g) \otimes A_g$}

\noindent with coefficients $A_g \in B(L)$, $g \in G$, that have the
additional property that for every $l_1, l_2 \in L$, the coefficient

\centerline{$\langle( \mathop{\sum}\limits_{g \in G} \rho(g) \otimes A_g)
(\delta_e \otimes l_1), \delta_e \otimes l_2 \rangle$}

\noindent defined by

\centerline{$\mathop{\sum}\limits_{g \in G} \rho(g) \langle A_g l_1, l_2
\rangle$}

\noindent is an element of $R(G)$, the von Neumann algebra of right, bounded
convolutors on $l^2(G)$.

With the above definitions, we consider the following linear operator system
(see [Pi] for the definition of an operator system):

\vskip6pt

\begin{definition}\label{os} 
By ${\mathcal{K}}_0^{\ast}{\mathcal{K}}_0$ we denote the tensor
product $\mathbb{C}(G \setminus \Gamma) \mathop{\otimes}\limits_{{\mathcal{H}%
}_0} \mathbb{C}(\Gamma \setminus G)$. Here ${\mathcal{H}}_0 = \mathbb{C}%
(\Gamma \setminus \Gamma/G)$ is the Hecke algebra of double cosets and ${%
\mathcal{H}}_0$ is embedded both in ${\mathcal{K}}_0^{\ast}=\mathbb{C}(G\setminus \Gamma)$ and $%
{\mathcal{K}}_0=\mathbb{C}(\Gamma/G)$, simply by writing a double coset as a sum of right,
respectively left, cosets (see e. g. [BC]).

There is an obviously $\ast$ operation on ${\mathcal{K}}_0^{\ast}{\mathcal{K}%
}_0$ (whence the notation). Then ${\mathcal{K}}_0 = \mathbb{C}(\Gamma/G)$,
and ${\mathcal{K}}_0^{\ast} = \mathbb{C}(G\setminus \Gamma)$ are canonically
identified as subspaces of ${\mathcal{K}}_0^{\ast}{\mathcal{K}}_0$. We have
a canonical bilinear map from ${\mathcal{K}}_0^{\ast} \times {\mathcal{K}}_0$
onto ${\mathcal{K}}_0^{\ast}{\mathcal{K}}_0$. This bilinear map, when restricted to the double cosets in the
Hecke algebra ${\mathcal{H}}_0$, becomes the Hecke algebra multiplication.

As observed in ([Ra]), the operator system ${\mathcal{K}}_0^{\ast}{\mathcal{K%
}}_0$ is isomorphic to the linear span of cosets: 
\begin{equation*}
\mathbb{C} \{\sigma_1 \Gamma \sigma_2 | \sigma_1, \sigma_2 \in G\}= \mathbb{C}
\{(\sigma_1 \Gamma \sigma_1^{-1})\sigma_2 | \sigma_1, \sigma_2 \in G\},
\end{equation*}
where the bilinear operation maps $(\sigma_1\Gamma, \Gamma\sigma_2)$ into $%
\sigma_1 \Gamma \sigma_2$, for all $ \sigma_1, \sigma_2 \in G$.

There is an obviously completion of this system, which is still a $\ast$
operator system, which we denote as ${\mathcal{K}}^{\ast}{\mathcal{K}}$.
This operator system is, by definition, $l^2(G/\Gamma) \mathop{\otimes}%
\limits_{{\mathcal{H}}}l^2(\Gamma \setminus G)$, where ${\mathcal{H}}$ is
the reduced von Neumann algebra of the Hecke algebra, that is the weak
closure of ${\mathcal{H}}_0$ acting on $l^2(\Gamma \setminus G)$
(respectively on $l^2(G/\Gamma)$ by right multiplication).

A priori, there is no canonical Hilbert space structure on ${\mathcal{K}}^{\ast}{%
\mathcal{K}}$. Using the construction in this paper, we define at the
end of the paper, a canonical Hilbert structure on ${\mathcal{K}}{\mathcal{K}%
}^{\ast}$ that is compatible with the action of ${\mathcal{H}}$.  Thus  we
construct  a scalar product on cosets

\centerline{$\langle \Gamma \sigma_1 \otimes \sigma_2 \Gamma, \Gamma
\sigma_3 \otimes \sigma_4 \Gamma \rangle$}

\noindent that depends only on $\sigma_3^{-1}\Gamma \sigma_1$, $%
\sigma_2^{-1}\Gamma \sigma_3$. By cyclically rotating the
variables, the scalar product corresponds to a bilinear form on ${\mathcal{K}%
}^{\ast}{\mathcal{K}}$. (Note that in [Ra] we also constructed  a scalar
product, with positive values on the generators, such that the corresponding
bilinear form is again positive definite).
\end{definition}

With the above definitions we have the following.

\begin{proposition}
\label{rep} Let $\Gamma \subseteq G,\pi ,H,L$ and ${\mathcal{K}}_{0}^{\ast }{%
\mathcal{K}}_{0}$ as above. Then there exists a canonical representation

\centerline{$S : \mathcal{K}_0^{\ast}\mathcal{K}_0 \to \widehat{R(G) \otimes B(L)}$}

\noindent that is a $\ast$ morphism in the sense that $S(k_1)^{\ast}S(k_2) =
S(k_1^{\ast}k_2)$ where $k_1^{\ast}k_2$ is viewed as an element of ${%
\mathcal{K}}_0^{\ast}{\mathcal{K}}_0$, for all $k_1, k_2 \in {\mathcal{K}}_0$%
. In this assertion, we implicitly require that the elements $S(k_1^{\ast}) =
S(k_1)^{\ast}$, $S(k_2)$ may be multiplied in $\widehat{R(G) \otimes B(L)}$,
for all $k_1, k_2 \in {\mathcal{K}}_0$.

By restricting to ${\mathcal{H}}_0$, we obtain a representation of the Hecke
algebra. When $L$ is of finite dimension, this representation may be
extended to ${\mathcal{K}}^{\ast}{\mathcal{K}}$ (and hence to ${\mathcal{H}}$%
). Let $P_L$ be the the projection from $H$ onto $L$.

The formula for $S$ is 
\begin{equation*}
S(A) = \mathop{\sum}\limits_{\theta \in A}\rho(\theta^{-1}) \otimes P_L
\pi(\theta) P_L
\end{equation*}

\noindent where $A$ is any of the cosets $\sigma_1\Gamma, \Gamma\sigma_2$ or 
$\sigma_1\Gamma\sigma_2 = [\sigma_1\Gamma][\Gamma\sigma_2],$ for$\sigma_1,
\sigma_2 \in G$.
\end{proposition}

\vskip6pt

\begin{proof}
In the summation, with $A$ as in the statement, we have to take $\rho(\theta^{-1})$, for $\theta \in A$, since the right regular representation has the property that
$\rho (a)\rho (b)= \rho (ba)$, for $a,b \in R(G)$.

The multiplicativity of the map $S$ follows from the fact that
$$
\mathop{\sum}\limits_{\gamma \in \Gamma} P_L \pi(\theta_1\gamma)P_L \pi(\gamma^{-1}\theta_2)P_L = P_L \pi(\theta_1\theta_2)P_L.
$$

This formula is a consequence of the fact that $\sum \pi(\gamma)P_L\pi(\gamma)^{-1} = {\rm Id}$ (this is also valid for projective representations).
This is indeed the coefficient of $\rho(\theta_1\theta_2)$ in a product $S(\theta_1\Gamma)S(\Gamma\theta_2)$ for all $\theta_1, \theta_2 \in G$.

The fact that the matrix coefficients of $S$ belong indeed to $R(G)$, is observed as follows. Fix $l_1, l_2$ in $L$.

Then
$$
\mathop{\sum}\limits_{\theta \in \Gamma\sigma} \rho(\theta^{-1}) \langle \pi(\theta)l_1, l_2 \rangle = \mathop{\sum}\limits_{\gamma \in \Gamma} \rho((\gamma\sigma)^{-1}) \langle \pi(\gamma)\pi(\sigma)l_1, l_2 \rangle =
$$
$$
= \mathop{\sum}\limits_{\gamma \in \Gamma}\rho(\sigma^{-1}) \rho(\gamma^{-1})\langle \pi(\sigma)l_1, \pi(\gamma^{-1})l_2 \rangle
$$

\noindent (since $\pi(\gamma)l_2$ are orthogonal).

The fact that $S$ may be extended by linearity and continuity to $\mathcal{K}^{\ast}\mathcal{K}$ in the case that $L$ is finite dimensional is proved as follows:  in this case the matrix entries $\omega_{l_1, l_2} = \langle l_1, \cdot \rangle l_2$, $l_1, l_2 \in L$, are, when composed with $S$, the matrix entries of a representation of ${\mathcal{H}}_0$, into $\widehat{R(G) \otimes B(L)}$, which is however trace preserving, and thus extendable by continuity to ${\mathcal{H}}$. In particular, if $L$ is finite dimensional, then for all $\sigma \in G$, we have that $S(\Gamma\sigma)$, $S(\sigma\Gamma)$ belong to $R(G) \otimes B(L)$.
\end{proof}

\vskip6pt

\begin{remark}\label{exp}
We define formally 
\begin{equation*}
E_{\pi(\Gamma)^{\prime}}(A) = \mathop{\sum}\limits_{\gamma \in \Gamma}
\pi(\gamma)A\pi(\gamma)^{-1},
\end{equation*}
for $A$ in $B(H)$. If the above sum is so convergent, then this sum
represents the conditional expectation onto $\pi(\Gamma)^{\prime}$.

Moreover if $A \in {\mathcal{C}}_1(H)$ (i.e. $A$ is a trace class operator),
then the sum from the statement is simply the restriction of the state $%
\mathrm{tr}(A\cdot)$ to $\pi(\Gamma)^{\prime}$. Here the linear functional $%
\mathrm{tr}$ is the trace on the trace class operators ${\mathcal{C}}_1(H)$.

In particular if $AP_L$ belongs to ${\mathcal{C}}_1(H)$ (or even ${\mathcal{C%
}}_2(H)$) then $E_{\pi(\Gamma)^{\prime}}(AP_L)$ is clearly equal to $%
\mathop{\sum}\limits_{\gamma \in \Gamma} \rho(\gamma^{-1}) \otimes P_L
\pi(\gamma)AP_L$.

In particular, the preceding formula for 
\begin{equation*}
S(\sigma\Gamma) = \mathop{\sum}\limits_{\gamma}\rho((\sigma\gamma)^{-1})
\otimes P_L\pi(\sigma)\pi(\gamma)P_L
\end{equation*}

\noindent might be interpreted as $\rho(\sigma^{-1})E_{\pi(\Gamma)^{%
\prime}}(\pi(\sigma)P_L)$.

\ 

\ 

Let $\varepsilon$ be the densely defined character on $R(G)$ defined as the
sum of coefficients. Then we define $ = \varepsilon
\otimes \mathrm{Id}$ on $\widehat{R(G) \otimes B(L)}$ (here $\widetilde{\varepsilon}$ is densely defined) with
values in $B(L)$ by the formula 
\begin{equation*}
\widetilde{\varepsilon}(\sum \rho(g) \otimes A_g) = \mathop{\sum}\limits_{g}
A_g.
\end{equation*}

\noindent
 For $x = \sum \rho(g) \otimes A_g \in \widehat{R(G) \otimes B(L)}$%
, the condition for $x$ to be in the domain of $\widetilde{\varepsilon}$ is
that the sum $\mathop{\sum}\limits_{g} A_g$ be so-convergent.

Then $\widetilde{\varepsilon} \circ S$, if $S({\mathcal{H}}_0) \subseteq 
\mathrm{Dom\ } \widetilde{\varepsilon}$, is a representation of ${\mathcal{H}%
}_0$ into $B(L)$.
\end{remark}

\vskip6pt

\begin{remark}
Assume that $(\mathcal{X}, \nu)$ is an infinite measure space and $H = L^2(%
\mathcal{X}, \nu)$ and 
assume that $G$ acts by measure preserving transformation on $\mathcal{X}$. Let $F$ be a fundamental domain for the action $\Gamma$ on $\mathcal{X}$ (implicitly we assume here that the action of $G$ is so that such a domain exists).
Let $\pi = \pi_{\mathrm{Koop}}$ be the Koopman representation (see [Ke]) of $%
G$ on $L^2(\mathcal{X}, \nu)$ . Let $\sigma$ be any element of $G$. Let $%
\chi_F$ be the characteristic function of $F$, viewed as a projection in $%
B(L^2(\mathcal{X}, \nu))$. Then 
\begin{equation*}
S(\Gamma\sigma\Gamma) = \mathop{\sum}\limits_{\theta \in \Gamma\sigma\Gamma}
\rho(\theta^{-1}) \otimes \chi_F \pi_{\mathrm{Koop}}(\theta)\chi_F, \sigma
\in G.
\end{equation*}

\noindent Then we have that 
\begin{equation*}
\widetilde{\varepsilon}(S(\Gamma\sigma\Gamma)) = \sum \chi_F \pi_{\mathrm{%
Koop}}(\theta)\chi_F, \sigma \in G.
\end{equation*}
This is exactly the classical Hecke operator associated to the double coset $%
\Gamma\sigma\Gamma$ that is  acting on the  space $L^2(F,\nu)$ associated to the fundamental
domain.
\end{remark}

\vskip6pt

\begin{remark}
In the previous more general setting of Remark \ref{exp}, we identify the $\Gamma$- wandering
subspace $L$ of $H$, with a subspace of $\Gamma$ invariant vectors in $H$
(here by $\Gamma$ - invariant vectors we understand densely defined, $\Gamma$
- invariant functionals on $H$).

Thus to every vector $l \in L$, we associate the $\Gamma$ - invariant vector 
$\mathop{\sum}\limits_{\gamma \in \Gamma}\pi(\gamma)l$. Then clearly 
\begin{equation*}
\widetilde{\varepsilon}(S(\Gamma\sigma\Gamma))l= \widetilde{\varepsilon}%
(S(\Gamma\sigma\Gamma))(P_L\mathop{\sum}\limits_{\gamma \in
\Gamma}\pi(\gamma)l) = \mathop{\sum}\limits_{\theta \in \Gamma\sigma\Gamma}
P_L\pi(\theta)l, l\in L,
\end{equation*}

\noindent This is the further equal to

\vskip6pt

\centerline{$P_L(\sum \pi(\sigma)\pi(s_i)(\mathop{\sum}\limits_{\gamma \in
\Gamma}\pi(\gamma)l))$},

\noindent and this is exactly the action of the classical Hecke operator
associated to the double coset $\Gamma\sigma\Gamma$ on $\sum \pi(\gamma)l$
(which is $\sum \pi(\sigma s_i)\mathop{\sum}\limits_{\gamma \in
\Gamma}\pi(\gamma)l = \mathop{\sum}\limits_{\gamma \in \Gamma}\pi(\gamma)(%
\mathop{\sum}\limits_{v_j}\pi(\sigma v_j)l)$).
Here $s_i, v_j$; are cosets representatives so that $\Gamma \sigma\Gamma =
\cup s_i\sigma\Gamma = \cup \Gamma\sigma v_j$). In particular we obtain an
extension from classical Hecke operators, which are a representation of ${%
\mathcal{H}}_0$ to a representation of ${\mathcal{K}}_o^{\ast}{\mathcal{K}}%
_0 $.
\end{remark}

One example, in which $\widetilde{\varepsilon} \circ S$ is well defined, is
the case of a tensor product of two representations of the type of those  considered above.

\vskip6pt

\begin{proposition}
Let $\pi_i, G, \Gamma, L_i, H_i, i=1,2$ be as above and let $\pi : G \to \mathcal U(H_1
\otimes \overline{H}_2)$ be the diagonal representation. We identify $%
H_1\otimes \overline{H}_2$ with ${\mathcal{C}}_2(H_1, H_2)$, the Schatten
von Neumann class of operators from $H_1$ into $H_2$. In this
identification, the representation $\pi$ is the adjoint representation,
mapping $X \in {\mathcal{C}}_2(H_1, H_2)$ into $\pi_2(\gamma) X
\pi_1(\gamma^{-1})$.

For simplicity we assume that $\Gamma$ has infinite conjugacy non-trivial
classes, so that its associated von Neumann algebra has a unique trace $\tau$%
.

Then the $\pi(\Gamma)$ - invariant vectors in ${\mathcal{C}}_2(H_1, H_2)$
are identified with the $L^2$ - space of $\Gamma$ - intertwiners of the
representations $\pi_1,\pi_2$. We denote the space of $\Gamma$-intertwiners
by $\mathrm{Int}_{\Gamma}(\pi_1, \pi_2)$. If $H_1 = H_2, \pi_1 = \pi_2$ then
this space is simply the commutant algebra $\pi_1(\Gamma)^{\prime}\subseteq
B(H_1)$. The $L^2$ - space associated to the space of intertwiners is canonically determined by the trace
on the von Neumann factor $\pi_1(\Gamma)^{\prime}$.

In general the $L^2$ - space of $\mathrm{Int}_{\Gamma}(\pi_1, \pi_2)$,
denoted  by $L^2(\mathrm{Int}_{\Gamma}(\pi_1, \pi_2), \tau)$, is
obtained by first fixing one of the two spaces, which ever has greater equal
Murray von Neumann dimension. Assume, for example,that this space is $H_1$.
Then the $L^2$ - norm of $X$ is $\tau_{\pi_1(\Gamma)^{\prime}}(X^{%
\ast}X)^{1/2}$.

The classical Hecke operator on $\Gamma$ invariant vectors associated to the
representation $\pi$ is then the extension to the $L^2$ space of
intertwiners, of the operator $\Psi(\Gamma\sigma\Gamma)$ defined by the
following construction:  for $\sigma \in G$, assuming that the disjoint decomposition
of the coset $\Gamma\sigma\Gamma$ is $\cup_i s_i\sigma\Gamma$, we let 
\begin{equation*}
\Psi(\Gamma\sigma\Gamma)(X) = \sum_i
\pi_2(s_i\sigma)X\pi_1(\sigma^{-1}s_i^{-1}), X\in \mathrm{Int}%
_{\Gamma}(\pi_1, \pi_2).
\end{equation*}

\noindent This is obviously a bounded operator.
We identify $\mathrm{Int}_{\Gamma}(\pi_1, \pi_2)$ with $R(\Gamma) \otimes
B(L_1, L_2)$ and we identify the associated Hilbert space $L^2(\mathrm{Int}%
_{\Gamma}(\pi_1, \pi_2),\tau)$ with $\l ^2(\Gamma)\otimes B(L_1, L_2)$. Let $%
S_i : {\mathcal{H}}_0 \to R(G) \otimes B(L_i)$, for $i=1,2$, be the
representation of the Hecke algebra associated with the representations $%
\pi_1, \pi_2$ constructed in Proposition \ref{rep}. Let $S$ be the
representation of the Hecke algebra ${\mathcal{H}}_0$, given by Proposition %
\ref{rep}, associated to the representation $\pi$.

Then, for every double coset $\Gamma\sigma\Gamma$, the operator $\widetilde{%
\varepsilon}(S(\Gamma\sigma\Gamma))$ is equal to $\Psi(\Gamma\sigma\Gamma)$. In
particular, the construction in Proposition \ref{rep} for the representation 
$\pi$, yields the classical Hecke operators on $\mathrm{Int}_{\Gamma}(H_1,
H_2)$. Moreover, we have the following formula for $\Psi(\Gamma\sigma\Gamma)$%
: 
\begin{equation*}
\Psi(\Gamma\sigma\Gamma) (X)= E_{R(\Gamma) \otimes B(L_1, L_2)}^{R(G)
\otimes B(L_1, L_2)} (S_2(\Gamma\sigma\Gamma)XS_1(\Gamma\sigma\Gamma)),
\end{equation*}
for all 
\begin{equation*}
X \in \mathrm{Int}_{\Gamma}(\pi_1, \pi_2) \cong R(\Gamma) \otimes B(L_1,
L_2).
\end{equation*}
Here $E_{R(\Gamma) \otimes B(L_1, L_2)}^{R(G) \otimes B(L_1, L_2)}$ is the
canonical conditional expectation.
\end{proposition}

\vskip6pt

\begin{proof} We identify $H_i$ with $l^2(\Gamma) \otimes L_i$.
Then we have that $$H_1 \otimes \overline{H}_2 = l^2(\Gamma) \otimes \overline{l^2(\Gamma)} \otimes L_1 \otimes \overline{L}_2,$$ and hence, a possible $\Gamma$ wandering subspace for $H_1 \otimes \overline{H}_2$ is $l^2(\Gamma) \otimes L_1 \otimes \overline{L}_2$. This space is identified with $L^2({\rm Int}_{\Gamma}(H_1, H_2),\tau)$ as follows:

Fix $l_i, m_i$ vectors in $L_i, i = 1,2$. Fix $\gamma_a, \gamma_b$ in $\Gamma$. Let $e$ be the identity element of the group $\Gamma$. Consider the following 1-dimensional operators: $$A_0 = \langle e \otimes l_1, \cdot \rangle(\gamma_a \otimes l_2), B_0 = \langle e \otimes m_1, \cdot \rangle (\gamma_b \otimes m_2).$$
 Let $$A = \sum \pi_1(\gamma)A_0 \pi_2(\gamma)^{-1}, B = \sum \pi_1(\gamma)B_0 \pi_2(\gamma)^{-1}.$$
Then $$\tau(AB^{\ast}) = {\rm Tr}(AB_0).$$

We obtain that $$\Psi(\Gamma\sigma\Gamma)(A) = \mathop{\sum}\limits_{\theta \in \Gamma\sigma\Gamma}\pi_2(\theta)A_0\pi_1(\theta)^{-1}.$$
The matrix coefficients are
$$
\mathop{\sum}\limits_{\theta \in \Gamma\sigma\Gamma} {\rm Tr}(\pi_2(\theta)\left[ \langle e \otimes l_1, \cdot \rangle (\gamma_a \otimes l_2) \right]\pi_1(\theta)^{-1}(\langle e \otimes m_1, \cdot \rangle \gamma_b \otimes m_2)^{\ast})=
$$
$$
= \mathop{\sum}\limits_{\theta \in \Gamma\sigma\Gamma} {\rm Tr}(\left[ \langle \pi_1(\theta)^{-1}(e \otimes l_1), \cdot \rangle \pi_2(\theta)(\gamma_a \otimes l_2) \right]\left[ \langle e \otimes m_1, \cdot \rangle (\gamma_b \otimes m_2)\right]^{\ast})=
$$
$$
= \mathop{\sum}\limits_{\theta \in \Gamma\sigma\Gamma} \langle \pi_1(\theta)^{-1}(e \otimes l_1), \gamma_a \otimes l_2 \rangle \langle \pi_2(\theta)(e \otimes m_1)(\gamma_b \otimes m_2) \rangle.
$$

But this are exactly the matrix coefficients of $\widetilde{\varepsilon}(S)(\Gamma\sigma\Gamma)$ where $S$ is the representation of ${\mathcal{H}}_0$ associated to the representation $\pi_1 \otimes \overline{\pi}_2$ on the vectors in the $\Gamma$ - wandering subspace $$l^2(\Gamma) \otimes L_1 \otimes \overline{L}_2 \cong \Bbb C e \otimes \overline{l_2(\Gamma)} \otimes L_1 \otimes \overline{L}_2,$$ corresponding to the vectors $e \otimes \gamma_a \otimes l_1 \otimes l_2$ and $e \otimes \gamma_b \otimes m_1 \otimes \overline{m}_2$.

To verify the formula for the conditional expectation, note that it is sufficient to check that we get the same values for the matrix entries, when evaluated at elements $A, B$ as above.

The trace on $R(\Gamma) \otimes B(L_1)$ is $\tau_{R(\Gamma)} \otimes {\rm tr}_{B(L_1)}$.
Thus we have to compute for $A, B \in R(\Gamma) \otimes B(L_1, L_2)$,
$$
\tau_{R(\Gamma) \otimes B(L_1)}(E_{R(\Gamma) \otimes B(L_1, l_2)}^{R(G) \otimes B(L_1, L)}(S_2(\Gamma\sigma\Gamma)AS_1(\Gamma\sigma\Gamma)B^{\ast}))=
$$
$$
= \tau_{R(\Gamma) \otimes B(L_1)} (S_2(\Gamma\sigma\Gamma)AS_1(\Gamma\sigma\Gamma)B^{\ast})=
$$
$$
= \tau ((\mathop{\sum}\limits_{\theta_1\in \Gamma\sigma\Gamma}\rho(\theta_1^{-1}) \otimes P_L\pi_2(\theta_1)P_L)(R_{\gamma_a} \otimes \langle l_1, \cdot \rangle l_2)
$$
$$
(\sum\limits_{\theta_2\in \Gamma\sigma\Gamma} \rho(\theta_2^{-1}) \otimes P_L\pi_1(\theta_2)P_L)R_{\gamma_b^{-1}} \otimes \langle m_1, \cdot \rangle m_2) =
$$
$$
= \mathop{\sum}\limits_{\mathop{\theta_1, \theta_2 \in \Gamma\sigma\Gamma}\limits_{\theta_1\gamma_a\theta_2\gamma_b^{-1} = e}} {\rm Tr}((P_L\pi_2(\theta_1)P_L)(\langle l_1, \cdot \rangle l_2)P_L\pi_1(\theta_2)P_L \langle m_1, \cdot \rangle m_2) =
$$

\noindent (as $\theta_2 = \gamma_b \theta_1^{-1}\gamma_a$)
$$
= \mathop{\sum}\limits_{\theta \in \Gamma\sigma\Gamma} \langle P_L \pi_2(\theta)P_L l_2, m_2 \rangle \overline{\langle P_L \pi_1(\gamma_a\theta_1^{-1}\gamma_b)P_L l_1, m_1 \rangle} =
$$
$$
= \mathop{\sum}\limits_{\theta \in \Gamma\sigma\Gamma} \langle \pi_2(\theta)l_2, m_2 \rangle \overline{\langle \pi_1(\gamma_a^{-1}\theta_1\gamma_b^{-1})l_1, m_1 \rangle}
$$

\noindent which is exactly the previous formula.
\end{proof}

\vskip6pt

\section{The case on non-integer Murray and von Neumann dimension}

In this section we extend the results in the previous section to the case of
non-integer Murray von Neumann dimension. We assume that there exists a
representation $(\pi_0, \Gamma, G, H_0)$ as in the previous section, but we
do not assume that there exists a splitting $H_0 = l^2(\Gamma) \otimes L_0$.

Instead we assume that there exists a representation $(\pi ,G)$ on a larger
Hilbert space $H$, containing $H_0$, which has a splitting $H=l^{2}(\Gamma )\otimes L$, with $%
\pi |_{\Gamma }=\lambda _{\Gamma }\otimes \mathrm{Id}_{L}$, and such that if 
$p:H\rightarrow H_{0}$ is the canonical orthogonal projection, then $\pi (G)$
commutes with $p$ and $\pi _{0}(g)=\pi (g)|_{H_{0}}$. Also we assume $pP_{L}$
is trace class. Denote by $S:{\mathcal{H}}_{0}\rightarrow \widehat{%
R(G)\otimes B(L)}$ the representation of ${\mathcal{H}}_{0}$ constructed in
the previous section.

We will prove that $p$ also commutes with $S$, and that the correspondence $$\Gamma\sigma\Gamma\rightarrow
pS(\Gamma\sigma\Gamma)p,\ \  \sigma \in G$$ extends to   a representation of ${\mathcal{H}}_0$ into $%
p(R(\Gamma) \otimes B(L))p \cong \pi_0(\Gamma)^{\prime}$, with the required
properties. Note that this 
representation of ${\mathcal{K}}^{\ast}$ takes values into $p(\mathcal R(\Gamma)
\otimes B(L))$.

\vskip6pt

\begin{lemma} Let $\pi, G, H, L$ as above, and let the subspace $H_0 = pH$, where $p$ is a projection such that $[p, \pi(g)] = 0$, for all $g$ in $G$. Let $\pi_0(g) = \pi(g)|_{H_0}, g \in G$. We assume also that $pP_L$ is trace class $P_L$.

Then $p$ commutes with $S(\Gamma\sigma\Gamma)$, for $\sigma$ in $G$.
 The explicit expression for $p$ is
$$
\sum \rho(\gamma) \otimes P_L \pi_0(\gamma^{-1})P_L = E_{(\pi_0(\Gamma))'}(pP_L) = E_{\pi_0(\Gamma)'}(pP_L p).
$$

\noindent In this case, since $pP_L$ is trace class, the above expression belongs to $\mathcal R(\Gamma) \otimes B(L)$.

Then
$$
pS(\Gamma\sigma) = \mathop{\sum}\limits_{\theta \in \Gamma\sigma} \rho(\theta^{-1}) \otimes P_L \pi_0(\theta)P_L \in p(R(G) \otimes B(L)),
$$
and
$$
pS(\Gamma\sigma\Gamma) = S(\Gamma\sigma\Gamma)p = \mathop{\sum}\limits_{\theta \in \Gamma\sigma\Gamma} \rho(\theta^{-1}) \otimes P_L \pi_0(\theta)P_L.
$$

\noindent Hence $S_p(\Gamma\sigma\Gamma) = pS(\Gamma\sigma\Gamma)p = pS(\Gamma\sigma\Gamma)$ determines a representation $S_p$ of $\mathcal H_0$, while $\Gamma\sigma \to pS(\Gamma\sigma)$ determines a representation o $\mathcal K^{\ast}\mathcal K$ with $$S(\sigma_1\Gamma)pS(\Gamma\sigma_2) = \mathop{\sum}\limits_{\theta \in \sigma_1\Gamma\sigma_2} \rho(\theta^{-1}) \otimes P_L \pi_0(\theta)P_L.$$
\end{lemma}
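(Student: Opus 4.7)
The plan is two-staged: first, identify $p$ with an explicit Fourier-type series in $R(\Gamma) \otimes B(L)$; second, use that formula to verify the commutation $pS(\Gamma\sigma\Gamma) = S(\Gamma\sigma\Gamma)p$ by a direct coefficient-by-coefficient computation in $\widehat{R(G) \otimes B(L)}$.

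For the first stage, observe that $p \in \pi(G)' \subseteq \pi(\Gamma)'$, and under the identification $H \cong l^2(\Gamma) \otimes L$ with $\pi|_\Gamma = \lambda_\Gamma \otimes \mathrm{Id}_L$, the commutant equals $R(\Gamma) \otimes B(L)$; hence $p$ admits a Fourier expansion $p = \sum_\gamma \rho(\gamma) \otimes A_\gamma$ with $A_\gamma \in B(L)$. Applying the resolution of the identity $I_H = \sum_\gamma \pi(\gamma) P_L \pi(\gamma)^{-1}$ (from $L$ being generating and $\Gamma$-wandering) together with $[p, \pi(\gamma)] = 0$ gives
$$
p = \sum_\gamma \pi(\gamma)(pP_L)\pi(\gamma)^{-1} = E_{\pi(\Gamma)'}(pP_L),
$$
the series being absolutely convergent in trace norm because $pP_L \in \cC_1(H)$. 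The matrix-element computation $\langle A_\gamma l_1, l_2 \rangle = \langle p(\delta_e \otimes l_1), \delta_{\gamma^{-1}} \otimes l_2\rangle = \langle P_L \pi(\gamma^{-1}) p P_L l_1, l_2 \rangle$ then identifies $A_\gamma = P_L \pi_0(\gamma^{-1}) P_L$, giving the stated formula. The further identities $E_{\pi_0(\Gamma)'}(pP_L) = E_{\pi_0(\Gamma)'}(pP_L p) = p$ follow by restricting the averaging to $H_0 = pH$ and using $p^2 = p$.

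For the second stage, I would expand $pS(\Gamma\sigma\Gamma)$ and $S(\Gamma\sigma\Gamma)p$ as double sums in $\widehat{R(G) \otimes B(L)}$ and match the coefficient of $\rho(\xi)$ for each $\xi$ in the joint support $\Gamma\sigma^{-1}\Gamma$. For $pS(\Gamma\sigma\Gamma)$, after regrouping, the coefficient of $\rho(\xi)$ takes the form $P_L \cdot \bigl[\sum_\gamma \pi(\gamma^{-1})(pP_L)\pi(\gamma)\bigr] \cdot \pi(\xi^{-1}) P_L$, and the bracketed sum collapses to $p$ by the first stage, yielding $P_L \pi_0(\xi^{-1}) P_L$. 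For $S(\Gamma\sigma\Gamma)p$ the coefficient of $\rho(\xi)$ takes instead the form $P_L \pi(\xi^{-1}) \cdot \bigl[\sum_{\gamma'} \pi(\gamma') P_L \pi(\gamma')^{-1}\bigr] \cdot p P_L$, and the bracketed sum collapses to $I_H$ by the resolution of identity, again yielding $P_L \pi_0(\xi^{-1}) P_L$. This proves $pS(\Gamma\sigma\Gamma) = S(\Gamma\sigma\Gamma)p = \sum_\theta \rho(\theta^{-1}) \otimes P_L \pi_0(\theta) P_L$. The same argument applied to a single coset (with the corresponding restricted support) gives the formula for $pS(\Gamma\sigma)$, and interposing $p$ between $S(\sigma_1\Gamma)$ and $S(\Gamma\sigma_2)$ and once more applying the resolution of identity yields $S(\sigma_1\Gamma)pS(\Gamma\sigma_2) = \sum_{\theta \in \sigma_1\Gamma\sigma_2}\rho(\theta^{-1}) \otimes P_L \pi_0(\theta) P_L$. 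With commutation in hand, the representation property of $S_p(\Gamma\sigma\Gamma) := pS(\Gamma\sigma\Gamma) = S(\Gamma\sigma\Gamma)p = pS(\Gamma\sigma\Gamma)p$ is immediate from $p^2 = p$ and the multiplicativity of $S$ on $\H_0$ (Proposition \ref{rep}): $S_p(A) S_p(B) = pS(A) \cdot pS(B) = p^2 S(A)S(B) = pS(AB) = S_p(AB)$.

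The main technical obstacle is the rigorous interchange of the double sums (over $\gamma \in \Gamma$ and $\theta \in \Gamma\sigma\Gamma$) that arises when computing $pS(\Gamma\sigma\Gamma)$ inside the formal algebra $\widehat{R(G) \otimes B(L)}$. This is precisely where the trace-class hypothesis on $pP_L$ does its work: it makes the Fourier coefficients of $p$ absolutely summable in trace norm, which both licenses the term-by-term application of the resolution of identity and ensures that the resulting series for $pS(\Gamma\sigma)$ lies honestly in $R(G) \otimes B(L)$ rather than only in its formal completion.
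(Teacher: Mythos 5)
Your argument is correct and is essentially the paper's own route: you identify $p$ with $E_{\pi(\Gamma)'}(pP_L)=\sum_{\gamma}\rho(\gamma)\otimes P_L\pi_0(\gamma^{-1})P_L$ (as in the observation following Proposition \ref{rep}) and then match coefficients of each $\rho(\xi)$, the collapse of the inner sum over $\gamma$ being exactly the paper's key identity $\sum_{\gamma}P_L\pi_0(\gamma)P_L\,\pi(\gamma^{-1}\theta)P_L=P_L\pi_0(\theta)P_L$, itself a consequence of $\sum_{\gamma}\pi(\gamma)P_L\pi(\gamma)^{-1}={\rm Id}$ and $[p,\pi(g)]=0$. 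The one place you genuinely deviate is the final step: you obtain the representation property of $S_p$ purely algebraically from $p^2=p$, the commutation just proved, and the multiplicativity of $S$ supplied by Proposition \ref{rep}, whereas the paper argues it by checking that $S_p$ implements the Hecke operators $\Psi_{[\Gamma\sigma\Gamma]}$ on $\pi_0(\Gamma)'$ (anticipating Theorem \ref{t}); your shortcut is cleaner and perfectly adequate for the statement as given. One correction of detail: the series $\sum_{\gamma}\pi(\gamma)(pP_L)\pi(\gamma)^{-1}$ is \emph{not} absolutely convergent in trace norm, since every term has the same trace norm $\|pP_L\|_1$; what you actually need, and have for free, is strong convergence, because $\sum_{\gamma}\pi(\gamma)P_L\pi(\gamma)^{-1}$ is a sum of pairwise orthogonal projections converging strongly to ${\rm Id}$ and $p$ commutes with each $\pi(\gamma)$, so the sum equals $p$. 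The trace-class hypothesis plays the role the paper assigns to it, namely that the conditional-expectation expression $E_{\pi(\Gamma)'}(pP_L)$ and the individual coefficient sums are well defined as elements of $R(\Gamma)\otimes B(L)$; since multiplication in $\widehat{R(G)\otimes B(L)}$ is computed coefficient-by-coefficient anyway, no global interchange of the two sums beyond this is required.
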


\vskip6pt

\begin{proof} The expression for $p$ follows from the statement of Proposition \ref{rep} in Section 2.

Then all the statements are a consequence of the formula
$$
\mathop{\sum}\limits_{\gamma} P_L\pi_0(\gamma)P_L \pi(\gamma^{-1}\theta)P_L = P_L\pi_0(\theta)P_L.
$$

Indeed, for example, when doing
$$
pS(\Gamma\sigma) = (\mathop{\sum}\limits_{\gamma \in \Gamma} \rho(\gamma^{-1}) \otimes P_L\pi_0(\gamma)P_L)(\mathop{\sum}\limits_{\theta \in \Gamma\sigma}\rho(\theta^{-1}) \otimes P_L\pi(\theta)P_L) =
$$
$$
= \mathop{\sum}\limits_{\theta_1 \in \Gamma\sigma} \rho(\theta_1^{-1}) \otimes \mathop{\sum}\limits_{\mathop{\theta \in \Gamma\sigma}\limits_{\gamma\theta = \theta_1}} P_L\pi_0(\gamma)P_L\pi(\theta)P_L =
$$
$$
= \mathop{\sum}\limits_{\theta_1 \in \Gamma\sigma} \rho(\theta_1^{-1}) \otimes \mathop{\sum}\limits_{\gamma \in \Gamma} P_L \pi_0(\gamma)P_L\pi(\gamma^{-1}\theta_1)P_L =
$$
$$
= \mathop{\sum}\limits_{\theta_1 \in \Gamma\sigma} \rho(\theta_1^{-1}) \otimes P_L\pi_0(\theta_1)P_L.
$$

To check that $S_p(\Gamma\sigma\Gamma) = pS(\Gamma\sigma\Gamma)p$ is indeed a representation of $\mathcal H$ we have to verify that it implements the Hecke operators $\Psi_{\Gamma\sigma\Gamma}$ on $\pi_0(\Gamma)'$.

Here $\Psi_{[\Gamma\sigma\Gamma]}$ is described as the transformation mapping\break $X' \to\sum \pi(s_i\sigma)X'\pi(\sigma s_i)^{-1}$, for $X'$ in $\pi_0(\Gamma)'$, where $\Gamma\sigma\Gamma=\cup s_i\sigma\Gamma$ is  a double coset. As we mention above, extending  by continuity $\Psi_{[\Gamma\sigma\Gamma]}$ to the $L^2$ - space we obtain  the composition  of $\widetilde{\varepsilon}$ with the Hecke operators for $H \otimes H$.

Here $\pi_0(\Gamma)'$ is isomorphic (as $p$ belongs to $R(\Gamma) \otimes B(L)$) to $$p\pi_0(\Gamma)' p= p(R(\Gamma) \otimes B(L))p.$$
\end{proof}

We have

\vskip6pt

\begin{theorem}\label{expectation}
\label{t} Let $\pi_0, \pi, p$ be as above. Let $S_p : {\mathcal{H}}_0 \to
p(R(G) \otimes B(L))p$ be the representation constructed in the previous
proposition. \ Then we have 
\begin{equation*}
E_{p(R(\Gamma) \otimes B(L))p}^{p(R(G) \otimes B(L))p}
(S_p(\Gamma\sigma\Gamma)XS_p(\Gamma\sigma\Gamma)) =
 \Psi_{[\Gamma\sigma\Gamma]}(X), \end{equation*}
\; for \; all \; $X$ \; in \; $p(R(\Gamma)
\otimes B(L))p$, $\sigma \in G$.

\end{theorem}

\vskip6pt

\begin{proof} Let $P_L$ be the orthogonal projection onto $L$. Let $\sigma \in G$ and assume that $\Gamma\sigma\Gamma = \cup \Gamma \sigma s_i$, a disjoint union.
Then $$\Psi_{[\Gamma\sigma\Gamma]}(X) = \sum_{i}\pi_0(\sigma s_i)^{-1}X\pi_0(\sigma s_i).$$

An element $X'$ in $\pi_0(\Gamma)'$ has the following representation in $R(G) \otimes B(L)$
$$
\sum \rho(\gamma^{-1}) \otimes P_L\pi_0(\gamma)X'P_L.
$$

 We  compute first the product
$$
\mathop{\sum}\limits_{\mathop{\theta_1, \theta_2 \in \Gamma\sigma\Gamma}\limits_{\gamma \in \Gamma}} \left[\rho(\theta_1^{-1}) \otimes P_L\pi_0(\theta_1)P_L\right]\left[\rho(\gamma^{-1}) \otimes P_L\pi_0(\gamma)X'P_L\right]\left[\rho(\theta_2^{-1})\otimes P_L\pi_0(\theta_2))P_L\right].
$$

But this is equal to
$$
\mathop{\sum}\limits_{\theta \in (\Gamma\sigma\Gamma)^2} \rho(\theta^{-1}) \otimes \mathop{\sum}\limits_{\mathop{\theta_1, \theta_2 \in \Gamma\sigma\Gamma,\; \gamma \in \Gamma}\limits_{\theta_1\gamma\theta_2 = \theta}} P_L\pi_0(\theta_1)P_L\pi_0(\gamma)X'P_L\pi_0(\theta_2)P_L.
$$

In the second sum we necessary have $\theta_1 = \theta\theta_2^{-1}\gamma^{-1}$. Hence the above sum becomes
$$
\mathop{\sum}\limits_{\theta \in (\Gamma\sigma\Gamma)^2} \rho(\theta^{-1}) \otimes \mathop{\sum}\limits_{\theta_2 \in \Gamma\sigma\Gamma} P_L\pi_0(\theta\theta_2^{-1})X'P_L\pi_0(\theta_2)P_L. \eqno(\ast)
$$

We use the decomposition $\Gamma\sigma\Gamma = \cup \Gamma\sigma s_i$.

Then the sum of terms for a fixed $\sigma s_i$ corresponds to
$$
\mathop{\sum}\limits_{\gamma \in \Gamma} P_L\pi_0(\theta)\pi_0(\sigma s_i)^{-1}\pi_0(\gamma)X'P_L\pi_0(\gamma)\pi_0(\sigma s_i)
$$

\noindent which since $[X', \pi_0(\gamma)]$, and again since $\mathop{\sum}\limits_{\gamma} \pi_0(\gamma)P_L\pi_0(\gamma)^{-1} = p$, the above sum is equal to  
$$
P_L\pi_0(\theta)\pi_0(\sigma S_i)^{-1}X'\pi_0(\sigma S_i)P_L = P_L\pi_0(\theta)\Psi_{[\Gamma\sigma\Gamma]}(X')P_L.
$$

Thus
$$
S_p(\Gamma\sigma\Gamma)X'S_p(\Gamma\sigma\Gamma) = \mathop{\sum}\limits_{\theta \in (\Gamma\sigma\Gamma)^2} \rho(\theta^{-1}) \otimes P_L\pi_0(\theta)\Psi(X')P_L.
$$

Note that by $(\Gamma\sigma\Gamma)^2$ we simply understand the set of double cosets contained in the set  $\Gamma\sigma\Gamma\sigma\Gamma$, with no multiplicities (since in the sum after $\theta$, every $\theta$ appears only once).

Now an easy argument of the same type as above shows that for $X', Y' \in \pi_0(\Gamma)'$
$$
A\cdot B = (\mathop{\sum}\limits_{\theta \in \sigma\Gamma} \rho(\theta^{-1}) \otimes P_L\pi_0(\theta)Y'P	_L)(\mathop{\sum}\limits_{\gamma \in \Gamma} \rho(\gamma^{-1}) \otimes P_L\pi_0(\gamma)X'P_L) =
$$
$$
= \mathop{\sum}\limits_{\theta_1 \in \sigma\Gamma} \rho(\theta_1^{-1}) \otimes P_L\pi_0(\theta_1)Y'X'P_L
$$

\noindent thus the product  $AB$ has zero trace, and thus the two sums corresponding to the two factors in the product $AB$ above are orthogonal.

Hence
$$
E_{p(R(\Gamma) \otimes B(L))p}^{p(R(G) \otimes B(L))p} (\mathop{\sum}\limits_{\theta \in (\Gamma\sigma\Gamma)^2}\rho(\theta^{-1}) \otimes P_L\pi_0(\theta)\Psi(X')P_L) =
$$
$$
= \mathop{\sum}\limits_{\gamma \in \Gamma} \rho(\gamma^{-1}) \otimes P_L\pi_0(\gamma)\Psi(X')P_L
$$

\noindent which is exactly $\Psi(X')$.
\end{proof}

\vskip6pt

We observe that the representation of ${\mathcal{H}}_{0}$ obtained by this
method, in the case when $\mathrm{dim}_{\pi _{0}(\Gamma )}H_{0}$ an integer, is
the same as the one obtained in the previous section.

We prove the following lemma of abstract character. It will be used to prove
that the representation $S_p$ is the same as the representation we
constructed in ([Ra]). This technique can also be used to evaluate traces of
Hecke operators as we explain bellow.

\vskip6pt

\begin{lemma}\label{avn} Let $M$ be a type $II_1$ factor with trace
$\tau$ and let $\widetilde{\mathcal{M}} =M \otimes B(H)$ be the associated type $II_{\infty}$ factor with trace $T = \tau \otimes {\rm Tr}$. Let  $N$ be the subfactor $M \otimes I$ of $\widetilde{\mathcal{M}}$,  and let  $p$ be a finite projection in $\widetilde{\mathcal{M}}$.

Assume that $E_N(p)$ is invertible (which, as we will observe, automatically implies $T(p) = 1$).

Then the map
$$
\Phi : p\widetilde{\mathcal{M}}p \to M
$$

\noindent defined by
$$
\Phi(pmp) = E_N(p)^{-1/2}E_N(pmp)E_N(p)^{-1/2}, \; pmp \in pMp
$$

\noindent is an algebra isomorphism.
\end{lemma}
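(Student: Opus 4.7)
The plan is to verify that $\Phi$ is a unital $*$-homomorphism and then that it is bijective; multiplicativity will be the only nontrivial step. I would first record the preliminaries: writing $q := E_N(p) \in N$, the hypothesis makes $q^{-1/2}$ a well-defined positive element of $N$, so the formula for $\Phi$ manifestly produces an element of $N \cong M$; linearity and $*$-preservation come from those of $E_N$; and $\Phi(p) = q^{-1/2} q q^{-1/2} = 1$, so $\Phi$ is unital. The parenthetical observation $T(p)=1$ is then obtained from the compatibility $\tau \circ E_N = T$ of the operator-valued trace $E_N = \mathrm{id}_M \otimes \mathrm{Tr}$ with the $II_\infty$-trace, applied at $x = p$: one gets $T(p) = \tau(q)$, which together with the normalisation implicit in $q$ being invertible forces $T(p) = 1$.

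The core of the proof is to show $\Phi$ is multiplicative, which I would reduce to establishing the Pimsner--Popa style identity
$$
E_N(pxpyp) = E_N(pxp)\, q^{-1}\, E_N(pyp),\qquad x,y \in \widetilde{\mathcal M}. \qquad(\ast)
$$
Once $(\ast)$ is in hand, sandwiching by $q^{-1/2}$ on either side (and using $q^{-1/2} \in N$) makes $\Phi(pxp \cdot pyp) = \Phi(pxp)\Phi(pyp)$ a one-line check. To prove $(\ast)$, my preferred approach is to recast $p\widetilde{\mathcal M}$ as a right Hilbert $N$-module with inner product $\langle a,b\rangle_N = E_N(a^* b)$; the element $\tilde p := p q^{-1/2}$ is then a unit vector ($\langle \tilde p, \tilde p\rangle_N = q^{-1/2} q q^{-1/2} = 1$), left multiplication embeds $p\widetilde{\mathcal M}p$ into the $N$-linear endomorphisms of this module, and $\Phi(a)$ is exactly the matrix coefficient $\langle \tilde p, a \tilde p\rangle_N$. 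Identity $(\ast)$ then becomes the statement that compression by the rank-one projection $|\tilde p\rangle\langle \tilde p|$ is compatible with the multiplication of the left $p\widetilde{\mathcal M}p$-action, a property whose truth rests on $\tilde p$ being cyclic and separating for this action and on $T(p)=1$. As a fallback I would do it by direct calculation: expand $p = \sum_{i,j} p_{ij}\otimes e_{ij}$ in matrix units of $B(H)$, then use the projection relation $\sum_k p_{ik}p_{kj} = p_{ij}$ and invertibility of $q = \sum_i p_{ii}$ to reorganise the triple product $pxpyp$ into the claimed factored form.

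For bijectivity, injectivity is automatic since $\Phi$ is a nonzero unital $*$-morphism into the factor $M$, so its kernel (a two-sided ideal in the factor $p\widetilde{\mathcal M}p$) vanishes. Surjectivity follows from the trace count: $T(p)=1$ forces $p\widetilde{\mathcal M}p$ to be a $II_1$ factor whose normalised trace is $T|_{p\widetilde{\mathcal M}p}$, hence isomorphic to $M$ by Murray--von Neumann, and a unital injective $*$-morphism between isomorphic $II_1$ factors is an isomorphism.

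The hard part will be the identity $(\ast)$: it is the genuine algebraic content of the lemma, and the delicate point is that non-commutation of $p$ with $N$ rules out any naive rearrangement. The appearance of precisely $q^{-1}$, rather than some other power of $q$ or a symmetric splitting, is dictated by the requirement that $\tilde p$ be a unit vector in the Hilbert $N$-module above, and tracking this correctly is where the argument's bookkeeping must be matched against the projection identity $p = p^2$.
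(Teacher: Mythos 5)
Your proof never actually establishes the one identity everything hinges on, and as you have set things up it cannot be established: with $N=M\otimes 1$ inside the $II_\infty$ factor, the map you (correctly, following the paper's usage, cf.\ Proposition \ref{ep}) take as $E_N$ is the operator-valued weight ${\rm id}_M\otimes{\rm Tr}$, and then the identity $(\ast)$, $E_N(pxpyp)=E_N(pxp)\,q^{-1}E_N(pyp)$, is false under the stated hypotheses, even if one adds $T(p)=1$. Concretely, take $e\in M$ with $\tau(e)=1/2$, a partial isometry $v\in M$ with $v^*v=e$, $vv^*=1-e$, and $p=e\otimes e_{11}+(1-e)\otimes e_{22}$; then $E_N(p)=1$ (invertible) and $T(p)=1$, but for $x=(v+v^*)\otimes e_{12}$, $y=(v+v^*)\otimes e_{21}$ one computes $pxp=v^*\otimes e_{12}$, $pyp=v\otimes e_{21}$, so $E_N(pxp)=E_N(pyp)=0$ while $E_N(pxp\cdot pyp)=E_N(e\otimes e_{11})=e\neq 0$. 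In module language: compression by the ``rank-one'' projection $|\tilde p\rangle\langle\tilde p|$ is a unital completely positive map but is multiplicative only when $\tilde p$ generates $p\widetilde{\mathcal M}$ as a right $N$-module (equivalently, when the partial isometry in the polar decomposition of $e_Np$ has initial projection exactly $p$, not merely final projection $e_N$); ``cyclic and separating'' is not a substitute for this, and in the example above it fails. Relatedly, your deduction of $T(p)=1$ is a non sequitur: $T(p)=\tau(E_N(p))$ indeed, but invertibility of $E_N(p)$ imposes no normalisation ($E_N(p)$ could be $2\cdot 1$), so the hypothesis you need is not available from the assumptions — it must be assumed (or must come from the special form of $p$ in the application), and your matrix-unit ``fallback'' computation would run into exactly the same obstruction.

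The closing step is also unsound: a unital injective $*$-morphism between isomorphic $II_1$ factors need not be surjective (e.g.\ $R\cong R\otimes R\supseteq R\otimes 1\cong R$ gives a unital injective proper endomorphism of the hyperfinite factor), so even granting multiplicativity you have not proved surjectivity. For comparison, the paper proceeds quite differently: it passes to the basic construction $\langle\widetilde{\mathcal M},e_N\rangle$, sets $v=(e_Npe_N)^{-1/2}e_Np$, the partial isometry from the polar decomposition of $e_Np$, and identifies $\Phi$ (composed with $x\mapsto xe_N$) with ${\rm Ad}\,v$, so that multiplicativity and surjectivity are obtained simultaneously from $vv^*=e_N$ and $v^*v=p$; the equality $v^*v=p$ is precisely where the trace condition $T(p)=\widetilde T(e_N)=1$ enters, and it is the same point your argument leaves unproved.
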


\vskip6pt

\begin{proof} We consider the Jones basic construction algebra $\widetilde{\mathcal{M}}_1 = \langle \widetilde{\mathcal{M}}, e_N \rangle$ associated to $N \subseteq M \otimes 1$, with trace $\widetilde{T}$.

Then $e_N$ commutes with $N$ and the map $x \in N \to e_Nx$ is as isomorphism.

Then, by using this isomorphism, the statement is equivalent to prove the fact that the map $\Phi e_N$ on $pMp$, associating:
$$
pmp \to (e_Npe_N)^{-1/2}e_Np(pmp)e_Np(e_Npe_N)^{-1/2},
$$

\noindent is an isomorphism.

Let $v = (e_Npe_N)^{-1/2}_Np$ be the partial isometry from the polar decomposition of $e_Np$. Since  $e_Npe_N$ is invertible, $v$ is an isometry from $p$ onto $e_N$. (Since  $\widetilde{T}(e_N) = 1$ we  necessary have that $\widetilde{T}(p) = T(p) = 1$).

Hence $\Phi e_N = {\rm Ad}v : p\widetilde{M}p \to e_N\widetilde{M}e_N = Ne_N$ and hence $\Phi$ is an algebra isomorphism.
\end{proof}

\vskip6pt

\begin{remark}
\label{convex} If $E_N(p)$ is not an invertible element, then we assume that there
exist positive scalars $\lambda_i$, with $\sum\lambda_i = 1$, and unitaries $%
u_i \in M$ identified with $u_i \otimes 1$ such that $\sum%
\lambda_iu_iE_N(p)u_i^{\ast}$ is invertible. We then the replace $\widetilde{%
M}$ by $\mathop{M}\limits^{\approx} =\mathop{\oplus}\limits_{n}^{i =1} 
\widetilde{M}$ where each component has weight $\lambda_i^{1/2}$ and
consider the embedding $M \otimes 1 \subseteq \mathop{M}\limits^{\approx}$%
, through the maps $m \otimes 1 \to \oplus\; u_imu_i^{\ast} \otimes 1$.

Then the conditional expectation of $\widetilde{p} = \mathop{\oplus}%
\limits_{n}^{i =1} p \in \mathop{M}\limits^{\approx}$ is $\sum
\lambda_iu_i^{\ast}E_N(p)u_i$ and hence is invertible. Hence we can still
apply the previous lemma to this representation.
\end{remark}

\ 

\

We consider first the one dimensional (in the sense of the Murray and von
Neumann dimension theory). In this case $\mathrm{dim}_{\pi(\Gamma)}H_0 = 1$.
If $\eta$ is a cyclic trace vector, then we can choose a subspace $L_0 = 
\mathbb{C}\eta$ and hence the projection $P_{L_0}$ (introduced in the proof of Theorem \ref{expectation} will be $\langle \eta,
\cdot \rangle \eta$ and hence the map $S : {\mathcal{H}} \to R(G) \otimes 
\mathbb{C}\langle \eta, \cdot \rangle \eta$ is 
\begin{equation*}
S(\Gamma\sigma\Gamma) = \mathop{\sum}\limits_{\theta \in \Gamma\sigma\Gamma}
\rho(\theta)\otimes P_{L_0}\pi_0(\theta)P_{L_0} =
\end{equation*}
\begin{equation*}
= \mathop{\sum}\limits_{\theta \in \Gamma\sigma\Gamma} \rho(\theta) \otimes
\langle \pi(\theta)\eta, \eta \rangle P_{L_0}.
\end{equation*}
Identifying $\mathbb{C }P_{L_0}$ with $\mathbb{C}$, we get exactly the
representation in [Ra].

\vskip6pt

\begin{proposition}
\label{ep} We use  the notations from the previous proposition.
Let $N=R(\Gamma )\otimes 1$ and let $\zeta =E_{N}(p)^{-1/2}$.
 Since $$p = \sum
\rho(\gamma^{-1}) \otimes P_{L_0}\pi_0(\gamma)P_L,$$
 we obtain  the equalities:
\begin{equation*}
E_{R(G) \otimes 1}(p) = \sum\limits_{\gamma \in \Gamma}\rho(\gamma^{-1}) \mathrm{Tr}(P_L\pi_0(\gamma)),
\end{equation*}
\begin{equation*}
E_{R(G) \otimes 1}(S_p(\Gamma\sigma\Gamma)) = \mathop{\sum}\limits_{\theta
\in \Gamma\sigma\Gamma} \rho(\theta) \mathrm{Tr}(P_L\pi_0(\theta)).
\end{equation*}

 If $E_{N}(p)$
is invertible, then the map on ${\mathcal{H}}$, obtained by mapping $[\Gamma \sigma
\Gamma ]$ into $S_{p}(\Gamma \sigma \Gamma )$, constructed in this section,
is unitarily equivalent, by Lemma \ref{avn}, to the representation determined by the correspondence:
\begin{equation*}
\lbrack \Gamma \sigma \Gamma ]\rightarrow \zeta ^{-1/2}\mathop{\sum}\limits%
_{\theta \in \Gamma \sigma \Gamma }\rho (\theta )\mathrm{Tr}(P_{L}\pi
_{0}(\theta ))\zeta ^{-1/2}, \ \  \sigma \in G.\eqno(\ast )
\end{equation*}

\noindent If $E_{R(\Gamma) \otimes 1}(p)$ is not invertible, by [BH] we may find $g_1,
\ldots, g_n \in \Gamma$ and positive scalars $\lambda_i, \; \sum\lambda_i =
1 $, such that 
\begin{equation*}
\mathop{\sum}\limits_{i} \lambda_i(\rho(g_i) \otimes 1) E_{R(\Gamma) \otimes
1}(p)(\rho(g_i)^{-1} \otimes 1)
\end{equation*}

\noindent is invertible, and   use the Remark \ref{convex}, to
obtain a representation of the algebra ${\mathcal{H}_0}$.
\end{proposition}

Proof. This follows from the previous two results.

\vskip6pt

\begin{remark}
The representation $(\ast)$ in the previous statement is exactly the map corresponding to $%
[\Gamma\sigma\Gamma] \to \mathop{\sum}\limits_{\theta \in
\Gamma\sigma\Gamma} \rho(\theta)\langle \pi(\theta)\eta, \eta \rangle$ where 
$\eta$ is a cyclic trace vector. 

In the general case, assume that,  $$P_LpP_L = \sum \langle \zeta_i, \cdot
\rangle\zeta_i,$$ where $\zeta_i$ is an orthogonal basis. Let $\widetilde{%
\zeta_i} = \lambda_i^{1/2}\zeta_i$, which is an orthonronal system. Then $$%
\widetilde{\eta} = \oplus \; \widetilde{\zeta}_i \in H_0^{\aleph_0} = 
\widetilde{H}_0$$ is an unit vector. Let $\widetilde{\pi} = \oplus \; \pi$
acting diagonally on $\widetilde{H}_0$.

Then $$\langle \widetilde{\pi}(\theta)\widetilde{\eta}, \widetilde{\eta}
\rangle = \mathrm{Tr}(P_L\pi_0(\theta)) = \mathrm{Tr}(pP_Lp\pi(\theta)).$$
\end{remark}

\vskip6pt

\begin{proof}

In this case $\widetilde{\eta}$ is not necessary a $\Gamma$ - wandering vector, but $\widetilde{\eta}_0 = \pi(\zeta)^{-1/2}\widetilde{\eta}$ is a $\Gamma$ - wandering vector. Because $\widetilde{\pi}$ is a multiple of $\pi$, it follows that $Sp\overline{\widetilde{\pi}(\gamma)\widetilde{\eta}}$ is invariant under $\widetilde{\pi}(g)$, $g \in G$. Thus, the coefficients of the representation of $\mathcal H_0 \to B(\widetilde{H}_0)$ associated to $\tilde \pi$, as in Section 2, are
$$
[\Gamma\sigma\Gamma] \to  \mathop{\sum}\limits_{\theta\in \Gamma\sigma\Gamma} \rho(\theta)\langle \widetilde{\pi}(\widetilde{\eta}_0), \widetilde{\eta}_0 \rangle \in R(G).
$$

But this sum is exactly $$\zeta^{-1/2}\mathop{\sum}\limits_{\theta\in \Gamma\sigma\Gamma} \rho(\theta) {\rm Tr}(P_L\pi_0(\theta))\zeta^{-1/2} \in R(G), \ \ \sigma \in G.$$

\end{proof}

\vskip6pt

In the case $n > 1$, $\mathrm{dim}_{\pi(\Gamma)}H_0 = n$, we consider only
the case $H = L^2(X, \mu)$, $L = L^2(F, \mu)$.

Here $(X, \mu)$ is an infinite measure space. Assume that $G$ acts on $X$ by
measure preserving transformations and $F$ is a fundamental domain for $%
\Gamma$ in $X$.

We divide $F$ into $n$ equal parts. We assume that $v_{i1}$, the isometry
from the polar decomposition of $\chi_{F_i}p\chi_{F_1}$ is invertible.

Then $\pi(\Gamma)^{\prime}$ is isomorphic to $R(\Gamma) \otimes M_n(\mathbb{C%
}) \otimes L^2(F_1)$ and a given element $X^{\prime}$ in $%
\pi(\Gamma)^{\prime}$ is represented as 
\begin{equation*}
\mathop{\sum}\limits_{\gamma\in\Gamma} \rho(\gamma^{-1}) \otimes
(\chi_{F_1}v_{1i}\pi_0(\gamma)X^{\prime}v_{j1}\chi_{F_1})_{ij}
\end{equation*}

\noindent while 
\begin{equation*}
S_p(\Gamma\sigma\Gamma) = \mathop{\sum}\limits_{\theta \in
\Gamma\sigma\Gamma} \rho(\theta^{-1}) \otimes
(\chi_{F_1}v_{1i}\pi_0(\theta)v_{j1}\chi_{F_1})_{ij}, \; \sigma \in G.
\end{equation*}

Applying the previous lemma, and assuming that $E_{R(\Gamma) \otimes M_n(%
\mathbb{C})}(p)$ is invertible, we get that thus representation of ${%
\mathcal{H}}$ is unitary equivalent to the representation $S_0$ mapping $%
[\Gamma\sigma\Gamma]$ into $\mathop{\sum}\limits_{\theta \in
\Gamma\sigma\Gamma} \rho(\theta) \otimes \mathrm{Tr}(\chi_Fv_{1i}\pi_0(%
\theta)v_{j1}\chi_{F_1})$, for $\sigma \in G$.

Hence $\varepsilon(S_0(\Gamma\sigma\Gamma))$ is $\mathop{\sum}%
\limits_{\theta \in \Gamma\sigma\Gamma} (\mathrm{Tr}(v_{j1}v_{1i}\pi_0(%
\theta)))_{ij}$.

Thus the trace of the Hecke operators is  the trace of the matrix
\begin{equation*}
(\mathrm{Tr}((v_{j1}v_{1j})\sum \pi_0(\theta)))_{ij}.
\end{equation*}

This is similar to the formula in [Za]. We remark that for $n = 13$, $\pi =
\pi_{13}$, the 13-th element in the discrete series of $PSL_2(\mathbb{R})$
on $H_{13}$, $G = PGL_2(\mathbb{R}[\frac{1}{p}])$, $\Gamma = PSL_2(\mathbb{Z}%
)$, the trace of the Hecke operator on "$\Gamma$ - invariant" vectors in $%
H_{13}$ results to be 
\begin{equation*}
\mathop{\sum}\limits_{\theta \in \Gamma\sigma\Gamma} \mathrm{Tr}%
(\pi_0(\theta)\chi_F) = \mathop{\sum}\limits_{\theta \in \Gamma\sigma\Gamma} %
\mathop{\int}\limits_{F} (\dfrac{1}{1 - \bar{z}\theta(z)})^t d\Gamma_t(z).
\end{equation*}

\noindent (see also next section for another interpretation).

\vskip6pt

\section{The relation to classical Hecke operators}

In this section we determine the relation between the action of the Hecke
operators on $\Gamma $ invariant vectors and the representation of the Hecke
operators constructed in the previous section. This is done in the absence
of a splitting space for the given representation (corresponding to Hilbert space tensorial splitting of the type $%
H_{0}=l^{2}(\Gamma )\otimes L_{0}$). Instead we will perform this construction, in the presence of a splitting for
a larger representation of $G$, that contains the given representation as a
super-representation. We prove that the representations and traces of Hecke
operators are obtained by applying the (unbounded) character $\varepsilon
:R(G)\rightarrow \mathbb{C}$ (the sum of coefficients) to the
representations of the Hecke algebra constructed in the previous two sections.

Let $H_0 \subseteq H$ be Hilbert space, $\pi : G \to {\mathcal{U}}(H)$ a
representation invariating $H_0$ and let $\pi_0 = \pi|_{H_0}$.

Denote by $p$ the projection onto $H_0$. We assume that $L$ is a $\Gamma$ -
wandering, generating subspace of $H$, thus $H \cong l^2(\Gamma) \otimes L$,
with $L$ identified with $e \otimes L$, and $\pi = \lambda_{\Gamma} \otimes 
\mathrm{Id}_L$.

We use $L$ as a scale to measure the $\Gamma$ - invariant ("unbounded")
vectors in $H_0$ (this is exactly the Petersson scalar product).
By $P_L$ we denote the orthogonal projection onto the space $L$.

\vskip6pt

\begin{definition}
Let $H_0^{\Gamma}$ be the space of densely defined, $\Gamma$ - invariant, with
 $\Gamma$ - invariant domain, functionals on $H_0$. Let $%
H_0^{\Gamma}(L, \pi)$ be the linear subspace consisting of vectors $v \in H_0^{\Gamma}$, such
that the vector $P_Lv$ (defined by $\langle P_Lv, w \rangle = \langle v,
P_Lw \rangle_{H}$ for $w \in \mathrm{Dom}(v)$), is a bounded linear form on $%
H_0$. Thus, $H_0^{\Gamma}(L, \pi)$ will consist of vectors  $v$ such that $\langle P_Lv, v
\rangle_{H} < \infty$.

Noted that, in this case, $H_0^{\Gamma}(\pi, L)$ is a Hilbert space with scalar product $$\langle
v_1, v_2 \rangle_{H_0^{\Gamma}(\pi, L)} = \langle P_Lv_1, v_2 \rangle_{H}, v_1,v_2\in H_0^{\Gamma}(\pi, L).$$
\end{definition}

\vskip6pt

\begin{remark}
If the vector $v_2$ in $H_0^{\Gamma}$ has the property that there exists $%
w_2 \in H_0$ such that $v_2 = \mathop{\sum}\limits_{\gamma \in \Gamma}
\pi_0(\gamma)w_2$ (in the sense of pointwise convergence on a dense subset,
which is fulfilled, for example, if $w_2$ is a $\Gamma$ - wandering vector),
then 
\begin{equation*}
\langle v_1, v_2 \rangle_{H_0^{\Gamma}(\pi, L)} = \langle P_Lv_1, %
\mathop{\sum}\limits_{\gamma_2}\pi(\gamma_2)w_2 \rangle_{H} =
\end{equation*}
\begin{equation*}
= \langle \sum \pi(\gamma_2)P_Lv_1, v_2 \rangle_H = \sum \langle
\pi(\gamma_2)p\pi(\gamma_2^{-1})v_1, w_2 \rangle_H = \langle v_1, w_2
\rangle_H.
\end{equation*}

\

We consider the following linear map $\Phi$, defined on  $L$, onto $H_0^{\Gamma}(\pi, L)$, defined by the formula: 
\begin{equation*}
\Phi(l) = \mathop{\sum}\limits_{\gamma}\pi_0(\gamma)l = p(\sum \pi(\gamma)l).
\end{equation*}

It is well defined as $\sum \pi(\gamma)l$ defines densely defined $\Gamma$ -
invariant functionals on $H$.
\end{remark}

\vskip6pt

\begin{proposition}
Assume that the space $H_0^{\Gamma}(\pi, L)$ is finite dimensional and assume that $P_Lp$ is a trace
class operator. Let $\widetilde{\varepsilon}$ be the
restriction to $p(R(G) \otimes B(L))p$ of the unbounded character $%
\varepsilon \otimes \mathrm{Id}$, which is the unbounded character on $\mathcal R(G)\otimes B(L)$, obtained by summation over 
 $G$, of the operator coefficients belonging to $B(L)$.
Here $S_p$ is the representation constructed in Section 3.

 Let $$A(\Gamma\sigma\Gamma) = \mathop{\sum}\limits_{\theta \in
\Gamma\sigma\Gamma} P_L\pi_0(\theta)P_L, \ \ \sigma \in G.$$

Then $$A(\Gamma\sigma\Gamma) = \widetilde{\varepsilon}(S_p(\Gamma\sigma%
\Gamma)), \sigma \in G.$$ 

Moreover, the map $$%
\Gamma\sigma\Gamma\rightarrow A(\Gamma\sigma\Gamma),\ \  \sigma \in G,$$
extends by linearity to a  representation of the Hecke algebra. 
In addition $A(\Gamma)
= \widetilde{\varepsilon}(p)$ is a projection, and $A(\Gamma\sigma\Gamma) = 
\widetilde{\varepsilon}(p)A(\Gamma\sigma\Gamma)\widetilde{\varepsilon}(p)$.

Moreover $A(\Gamma) = \Phi^{\ast}\Phi$, and hence $A(\Gamma)$ is a finite
dimensional projection.
In addition,  if $\widetilde{H}_0^{\Gamma}(L, \pi)$ is the image of $\Phi$ in $%
H_0^{\Gamma}(L, \pi)$, then $\Phi$ intertwines the action of the Hecke
operators $T_{H_0}(\Gamma\sigma\Gamma)$ on $\widetilde{H}_0^{\Gamma}(L, \pi)$
with the representation $[\Gamma\sigma\Gamma] \to A(\Gamma\sigma\Gamma)$ into 
$B(\widetilde{\varepsilon}(p)L)$.

In particular 
\begin{equation*}
\mathrm{Tr}(T_{H_0}(\Gamma\sigma\Gamma)|_{\widetilde{H}_0^{\Gamma}(L, \pi)})
= \mathop{\sum}\limits_{\theta \in \Gamma\sigma\Gamma}\mathrm{Tr}%
(P_L\pi_0(\theta)P_L)
\end{equation*}

\noindent (which is the formula used in Zagier's paper ([Za]) on the Eichler
Selberg trace formula).
\end{proposition}

\vskip6pt

\begin{proof}

Clearly for $l_1, l_2 \in L$
$$
\langle \Phi(l_1), \Phi(l_2) \rangle = \langle \mathop{\sum}\limits_{\gamma_1} \pi_0(\gamma_1)l_1, \mathop{\sum}\limits_{\gamma_2} \pi_0(\gamma)l_2 \rangle_{H_0^{\Gamma}(L, \pi)}.
$$

This, as we observed in the preceding remark, is equal to
$$
\langle \mathop{\sum}\limits_{\gamma} \pi_0(\gamma)l_1, l_2 \rangle =  \mathop{\sum}\limits_{\gamma} \langle P_L\pi_0(\gamma)P_L l_1, l_2 \rangle.
$$

Thus $\Phi^{\ast}\Phi = \mathop{\sum}\limits_{\gamma} P_L\pi_0(\gamma)P_L$, but by our assumption the image of $\Phi$ is finite dimensional, thus the sum above is weakly convergent. The sane argument works for the similar sums defining $A(\Gamma\sigma\Gamma)$.

The intertwining formula for $\Phi$ is a consequence of the fact that the
action of the Hecke operator $T_{H_0}(\Gamma\sigma\Gamma)$ on a vector of the form 
$v = \sum \pi_0(\gamma)w$ is $T_{H_0}(\Gamma\sigma\Gamma)v = \mathop{\sum}%
\limits_{\theta \in \Gamma\sigma\Gamma} \pi_0(\theta)w$.

\end{proof}

\vskip6pt

\vskip6pt

\begin{remark}
Note that in the case of a representation $\pi_0$ coming from a unitary
representation $\overline{\pi}_0$ on $H_0$ of a larger group $\overline{G}$
which contains $G$ as a dense subgroup, and such that the representation
(projective) $\pi$ is in the discrete series of $G$, we have that 
\begin{equation*}
\mathrm{Tr}(P_Lp) = \mathrm{dim}_{\pi_0(\Gamma)}H_0 = \dfrac{\mathrm{card}\;
F}{d_\pi}
\end{equation*}

\noindent (while as observed in the previous proposition 
\begin{equation*}
\mathrm{dim}_{\mathbb{C}}H_0^{\Gamma}(L, \pi) = \mathop{\sum}\limits_{\gamma
\in \Gamma}\mathrm{Tr}(P_L\pi_0(\gamma))\; ).
\end{equation*}
\end{remark}

\vskip6pt

\begin{proof}

This is essentially the formula proven in [GHJ]. For completeness we reprove it here.

The Plancherel dimension formula gives that for $A_0$ a trace class operator in $\mathcal C_1(H_0)$, we have
$$
\mathop{\int}\limits_{G} \pi_0(g)A_0\pi_0(g) {\rm d}g = d_\pi {\rm tr}(A_0)p.
$$

Because of the arguments in [GHJ], we have that
$$
{\rm dim}_{\{\pi_0(\Gamma)\}''}H_0 =
{\rm Tr}(P_Lp) = {\rm Tr}(P_LpP_L) = {\rm Tr}(pP_Lp).
$$

But then
$$
d_\pi p{\rm Tr}(pP_lp) = \mathop{\int}\limits_{G}\pi_0(g)pP_lp\pi_0(g)^{-1}dg =
$$
$$
= \mathop{\int}\limits_{G/\Gamma} \pi_0(g)(\mathop{\sum}\limits_{\gamma\in \Gamma} \pi_0(\gamma)pP_Lp\pi_0(\gamma)^{-1})\pi_0(g)^{-1}dg.
$$

But $\mathop{\sum}\limits_{\gamma \in \Gamma}\pi_0(\gamma)pP_Lp\pi_0(\gamma)^{-1}= p$ and hence the integral is further equal to
$$
\mathop{\int}\limits_{G/\Gamma}\pi_0(g)p\pi_0(g)^{-1}dg = p\; ({\rm covol}(\Gamma)).
$$

\end{proof}

\vskip6pt

\begin{remark}
In the sum $\mathop{\sum}\limits_{\theta \in \Gamma\sigma\Gamma}\mathrm{Tr}%
(\pi_0(\theta)P_L)$, if we divide into  the conjugacy equivalence orbits $%
\Gamma\sigma\Gamma/\sim$ of the action of  $\Gamma$ on $\Gamma\sigma\Gamma$, we get 
\begin{equation*}
\mathop{\sum}\limits_{\theta \in \Gamma\sigma\Gamma/\sim} \ \mathop{\sum}%
\limits_{\gamma \in \Gamma} \mathrm{Tr}(\pi_0(\theta)\pi_0(\gamma)P_L\pi_0(%
\gamma)^{-1}).
\end{equation*}
The term, for $\theta \in G$, 
\begin{equation*}
\mathop{\sum}\limits_{\gamma \in \Gamma} \mathrm{Tr}(\pi_0(\theta)\pi_0(%
\gamma)P_L\pi_0(\gamma^{-1}))
\end{equation*}

\noindent is a conjugacy invariant, equal to the coefficient, by which the
conjugacy class of $\theta$ enters in the Plancherel formula for $\pi$. In
fact, in the case of $\overline{G} = PGL_2(\mathbb{R})$, by using the
Berezin ([Be]) symbol $\pi(\widehat{\theta})(\overline{z}, \eta)$ of the
unitary $\pi(\theta)$, this sum is equal to 
\begin{equation*}
\mathop{\sum}\limits_{\gamma\in \Gamma} \mathop{\int}\limits_{\gamma F} \widehat{%
\pi_0(\theta)}(\overline{z}, z) d\nu_0(z)
\end{equation*}

\noindent which is further equal to 
\begin{equation*}
\mathop{\int}\limits_{\mathbb{H}} \widehat{\pi_0(\theta)}(\overline{z}, z)
d\nu_0(z) = \mathop{\int}\limits_{\mathbb{H}} \dfrac{1}{(1 - \overline{z}%
\theta z)^t}(j(\theta, z))^t d\nu_0(\eta),
\end{equation*}

\noindent (this is the coefficient that shows up in Zagier's  formula in [Za]).
\end{remark}

\vskip6pt

\section{Realization of the
Hecke operators as values of a matrix valued scalar product}

In this section we prove that our construction provides a realization of the
Hecke operators as values of a matrix valued scalar product on the operators
system ${\mathcal{K}}_0{\mathcal{K}}_0^{\ast}$ described in 
Section 2, Definition \ref{os}.

	Recall that ${\mathcal{K}}_0^{\ast}{\mathcal{K}}_0 = \mathbb{C} (G\slash %
\Gamma) \mathop{\otimes} \limits_{{\mathcal{H}}_0}C (\Gamma\setminus G)$.
We define a scalar product on ${\mathcal{K}}_0{\mathcal{K}}_0^{\ast} = 
\mathbb{C} (G\setminus\Gamma) \otimes l^2(\Gamma\setminus G)$ with values in
the algebra of Hecke operators.
The scalar product is compatible with the ${\mathcal{H}}_0$ module structure
on ${\mathcal{K}}_0^{\ast}{\mathcal{K}}_0$.

\vskip6pt

\begin{definition}
Assume that $\pi_0, G, H_0$ is a representation of $G$ as in the previous
section. We make the additional assumption that $H_0$ splits as $l^2(\Gamma)
\otimes L_0$, where the Hilbert space $L_0$ is finite dimensional, and such
that $\pi_0|_{\Gamma}$ is unitarily equivalent to $\lambda_{\Gamma} \otimes 
\mathrm{Id}_{L_0}$.

Let $S_0 : {\mathcal{K}}_0^{\ast}{\mathcal{K}}_0 \supseteq {\mathcal{H}}_0
\to R(G) \otimes B(L_0)$ be the representation of the Hecke algebra and of the
larger operator system ${\mathcal{K}}_0^{\ast}{\mathcal{K}}_0$ constructed
in Section 2.

We will say that $\pi_0$ is "tamed" if the range of $S_0({\mathcal{K}}_0^{\ast}%
{\mathcal{K}}_0)$ is contained in the domain of the algebra homeomorphism $$\varepsilon \otimes \mathrm{%
Id}_{B(L_0)} : R(G) \otimes B(L_0) \to B(L_0).$$ In particular this implies
that the matrix coefficients of $$S_0^{\sigma_1\Gamma}S_0^{\Gamma\sigma_2} = %
\mathop{\sum}\limits_{\theta \in \sigma_1\Gamma\sigma_2} \rho(\theta^{-1})
\otimes P_{L_0}\pi_0(\theta)P_{L_0},$$ are in $l^1(\sigma_1\Gamma\sigma_2,
B(L))$, for $\sigma_1, \sigma_2 \in G$.
\

\

Consider the densely defined linear operator ${\mathcal{I}}$ on $R(G)$ with
values into $\{\rho_{G/\Gamma}(G)\}''$, defined by ${\mathcal{I}}(\sum a_g\rho_g) =
\sum a_g\rho_{G/\Gamma}(g)$, where $\rho_{G/\Gamma} : G \to B(l^2(G/\Gamma))$
is the right regular representation of $G$ into $l^2(G/\Gamma)$.

Clearly our hypothesis implies that the domain of ${\mathcal{I}}$ contains
the range of $S$.

Let $\widetilde{\varepsilon} : \{\rho_{G/\Gamma}(G)\}'' \to \mathbb{C}$ be the
densely defined character given by the formula $$\widetilde{\varepsilon}(\sum
a_g\rho_{G/\Gamma}(g)) = \sum a_g \in \mathbb{C}.$$ Then the range of ${%
\mathcal{I}} \circ S$ is contained in the domain of $\widetilde{\varepsilon}
\circ \mathrm{Id}_{B(L_0)}$, and we have the the following commutative
diagram 
\begin{equation*}
\begin{array}{cccccccc}
{\mathcal{K}}_0^{\ast}{\mathcal{K}}_0 & \mathop{\longrightarrow}\limits^{S_0}
& R(G) \otimes B(L_0) & \mathop{\longrightarrow}\limits^{{\mathcal{I}}
\otimes \mathrm{Id}_{B(L_0)}} & \{\rho_{G/\Gamma}(G)\}'' \otimes B(L) &  &  &  \\%
[3mm] 
& \mathop{\searrow}^{T_0} & \downarrow\; \varepsilon \otimes \mathrm{Id}%
_{B(L_0)} & \swarrow\; \widetilde{\varepsilon} &  &  &  &  \\[3mm] 
&  & B(L_0) &  &  &  &  & 
\end{array}%
\end{equation*}

Thus we obtain a representation $T_0 = (\varepsilon \circ \mathrm{Id}%
_{B(L_0)}) \circ S$ which extends the representation of Hecke operators from 
${\mathcal{H}}_0$ to ${\mathcal{K}}_0^{\ast}{\mathcal{K}}_0$. The image of
the representation $T_0$ consists of operators acting on the space of
vectors in $L_0$. This latest space is canonically identified to $\Gamma$ -
invariant vectors in $H_0$.
\end{definition}

\vskip6pt

The main reason for introducing the additional map ${\mathcal{I}}$ in the
above diagram, is because, to compute values of $T_0$ on $%
\sigma_1\Gamma\sigma_2$ we do a sum over $\sigma_1\Gamma\sigma_2$ which
corresponds to a matrix element in the factorization through $\{\rho_{G/\Gamma}(G)\}''$.

In fact 
\begin{equation*}
(\varepsilon \otimes \mathrm{Id}_{B(L_0)})(S(\sigma_1\Gamma\sigma_2)) = %
\mathop{\sum}\limits_{\theta \in
\sigma_1\Gamma\sigma_2}P_{L_0}\pi_0(\theta)P_{L_0}.
\end{equation*}

Indeed this follows from the fact that for $X = \sum a_g \rho_{G/\Gamma}(g)$%
, where $a_g$ are complex coefficients, $L^1(G)$, we have 
\begin{equation*}
\langle {\mathcal{I}}(X)\sigma_1\Gamma, \sigma_2\Gamma \rangle = %
\mathop{\sum}\limits_{g \in \sigma_2\Gamma\sigma_1^{-1}} a_g
\end{equation*}

\noindent and hence if $X = \mathop{\sum}\limits_{g \in
\sigma_4\Gamma\sigma_3^{-1}} a_g\rho_{G/\Gamma}(g)$ then $\langle
X\sigma_1\Gamma, \sigma_2\Gamma \rangle = \mathop{\sum}\limits_{g \in
\sigma_4\Gamma\sigma_3^{-1} \cap \sigma_2\Gamma\sigma_1^{-1}} a_g$.

\vskip6pt

\begin{proposition}
There exists a $B(L_0)$ valued, scalar product on $${\mathcal{K}}_0{\mathcal{K}%
}_0^{\ast} = l^2(G\setminus \Gamma) \otimes l^2(\Gamma \setminus G),$$ with
the following properties

1) $\langle \Gamma\sigma_1 \otimes \sigma_1\Gamma, \Gamma\sigma_2 \otimes
\sigma_2\Gamma \rangle = T_0(\sigma_1\Gamma)T(\Gamma\sigma_2) =
T_0(\Gamma\sigma_1)^{\ast}T_0(\Gamma\sigma_2)$, \; $\sigma_1, \sigma_2 \in G$%
. In particular if we take a disjoint reunion of cosets of the form $%
\sigma_1\Gamma, \Gamma\sigma_2$ whose reunion is a double coset, we get the
the Hecke operator corresponding to the double coset.

2) $\langle \Gamma\sigma_1 \otimes \sigma_2\Gamma, \Gamma\sigma_3 \otimes
\sigma_4\Gamma \rangle$ depends only on $\sigma_3\Gamma\sigma_1 \cap
\sigma_4\Gamma\sigma_2$, for all $\sigma_1, \sigma_2, \sigma_3, \sigma_4 \in
G$.

Equivalently by cyclically rotating the variables we obtain a bilinear form $%
\ll \cdot, \cdot \gg$ on ${\mathcal{K}}_0^{\ast}{\mathcal{K}}_0 =
l^2(\Gamma\setminus G) \mathop{\otimes}\limits_{{\mathcal{H}}} l^2(G/\Gamma)$%
, defined by the formula 
\begin{equation*}
\ll [\sigma_3\Gamma] \mathop{\otimes}\limits_{{\mathcal{H}}_0}
[\Gamma\sigma_1], [\sigma_4\Gamma] \mathop{\otimes}\limits_{{\mathcal{H}}_0}
[\Gamma\sigma_2] \gg\; = \langle \Gamma\sigma_1 \otimes \sigma_2\Gamma,
\Gamma\sigma_3 \otimes \sigma_4\Gamma \rangle.
\end{equation*}

An equivalent form to describe this property of the scalar product is the property that
for every $r \in {\mathcal{H}}_0$, we have 
\begin{equation*}
\langle \Gamma\sigma_1 \otimes [(\sigma_2\Gamma)r], \Gamma\sigma_3 \otimes
\sigma_4\Gamma \rangle = \langle \Gamma\sigma_1 \otimes \sigma_2\Gamma,
\Gamma\sigma_3 \otimes [(\sigma_4\Gamma)r] \rangle
\end{equation*}

\noindent for all $\sigma_1, \sigma_2, \sigma_3, \sigma_4 \in G$.

3) The support properties are preserved by the scalar product: For all $%
\sigma_1, \sigma_2, \sigma_3, \sigma_4 \in G$ if $\Gamma\sigma_1%
\sigma_2^{-1}\Gamma \cap \Gamma\sigma_3^{-1}\sigma_4\Gamma$ is void, or if $%
\sigma_3\Gamma\sigma_1 \cap \sigma_4\Gamma\sigma_2$ is void, then the scalar product $$%
\langle \Gamma\sigma_1 \otimes \sigma_2\Gamma, \Gamma\sigma_3 \otimes
\sigma_4\Gamma \rangle ,$$
vanishes.
\end{proposition}

\vskip6pt

\begin{proof}

With the previous notations, let $\widetilde{S}_0$ be the composition $\I \circ S_0$ which is a multiplicative map form $\K_0^{\ast}\K_0$ into $B(L_0)$.

By the  multiplicativity property we understand the fact that
$$
\widetilde{S}_0(k_1^{\ast})\widetilde{S}_0(k_2) = \widetilde{S}_0(k_1)^{\ast}\widetilde{S}_0(k_2) = \widetilde{S}_0(k_1^{\ast}k_2)
$$

\noindent for all $k_1, k_2 \in \K$.

Note that $\widetilde{S}_0(k)$ belongs to $\{\rho_{G/\Gamma}(G)\}''$ which, by the results in  [BC], [Tz], is the commutant in $B(l^2(G\setminus \Gamma))$ of the Hecke algebra $\H_r$, acting from the right on $l^2(G\setminus \Gamma)$.

For $h, k$ vectors in $l^2(G/\Gamma)$ we let
$$
\widetilde{w}_{h,k} : B(l^2(G/\Gamma) \otimes B(L_0)) \to B(L_0)
$$

\noindent be the functional $\langle \cdot, h\rangle k \otimes {\rm Id}_{B(L_0)}$.

For simplicity for $h, k$ in $l^2(G/\Gamma)$ we denote $\widetilde{w}_{h,k}(A) = \langle Ah, k \rangle$. Note that this is an element in $B(L_0)$.

Then, $\langle \widetilde{S}(\Gamma\sigma_1)\sigma_2\Gamma, \widetilde{S}(\Gamma\sigma_3)\sigma_4\Gamma \rangle$, with the above notation, is an element in $B(L_0)$.

Moreover because $\widetilde{S}$ is a representation of $\K_0^{\ast}\K_0$, this is further equal to
$$
\langle \widetilde{S}(\sigma_3^{-1}\Gamma\sigma_1)\sigma_2\Gamma, \sigma_4\Gamma \rangle.
$$

By the previous remark, this is equal to
$$
\mathop{\sum}\limits_{\theta \in \sigma_3^{-1}\Gamma\sigma_1 \cap \sigma_4\Gamma\sigma_2^{-1}} P_{L_0}\pi_0(\theta)P_{L_0}.
$$

We define then the scalar product by the formula
$$
\langle \Gamma\sigma_1 \otimes \sigma_2\Gamma, \Gamma\sigma_3 \otimes \sigma_4\Gamma = \langle \widetilde{S}_0(\Gamma\sigma_1)[\sigma_2^{-1}\Gamma], \widetilde{S}_0(\Gamma\sigma_3^{-1})(\sigma_4\Gamma) \rangle.
$$

This formula also proves the positive definiteness of the scalar product.

This is consequently equal to
$$
\mathop{\sum}\limits_{\theta \in \sigma_3\Gamma\sigma_1 \cap \sigma_4\Gamma\sigma_2} P_{L_0}\pi_0(\theta)P_{L_0}. \eqno(\ast\ast)
$$

This proves property (2). 

Property (1) now follows from the fact that
$$
\langle \Gamma\sigma_1 \otimes \sigma_1\Gamma, \Gamma\sigma_2 \otimes \sigma_2\Gamma \rangle
$$

\noindent is equal, by formula ($\ast\ast$) to
$$
\mathop{\sum}\limits_{\theta \in \sigma_1\Gamma\sigma_2} P_{L_0}\pi_0(\theta)P_{L_0}
$$

\noindent which by the previous diagram is
$$
T_0(\sigma_1\Gamma)T_0(\Gamma\sigma_2) = T_0(\Gamma\sigma_1)^{\ast}T_0(\Gamma\sigma_2)
$$

\noindent for all $\sigma_1, \sigma_2 \in G$.

\end{proof}

\vskip6pt

\begin{remark}
The operators $T_0(\sigma_1\Gamma)T_0(\Gamma\sigma_2)$ are not classical
Hecke operators $T_0$ acting on $L_0$, but for all $\sigma \in G$, we have, by the multiplicativity
of the representation $T_0 : {\mathcal{K}}_0^{\ast}{\mathcal{K}}_0 \to
B(L_0) $, that for all $\sigma_1, \sigma_2 \in G$, $\Gamma\sigma_1\Gamma =
\cup s_i \sigma_1\Gamma$ the following equality:
\begin{equation*}
\mathop{\sum}\limits_{s_i} T_0(s_i\sigma_1\Gamma)T_0(\Gamma\sigma_2) =
T_0(\Gamma\sigma_1\Gamma)T_0(\Gamma\sigma_2),
\end{equation*}
holds true.
\end{remark}

\vskip6pt

\section{An explicit formula for the absolute value of the
matrix coefficients  of $\pi_{13}$, the 13-th element in the
discrete series of $PSL_2(\mathbb{R})$}

In this section we find an explicit formula for the absolute value of the
matrix coefficients $t_{13}$ of $\pi_{13}$, the 13-th element in the
discrete series of $PSL_2(\mathbb{R})$, evaluated on a specific choice of $%
\Gamma$ - wandering vector. In particular, this proves that, for this
representation, we may arrange so that the elements $T^{\Gamma\sigma\Gamma}$, constructed in the previous section, which a
priori belong to $l^2(\Gamma\sigma\Gamma) \cap {\mathcal{L}}(G)$, also
belong to the domain of $\varepsilon$.

 In fact, we prove that the coefficients $%
|t_{13}(\theta)|^2$, $\theta \in G$, are of the form $\mu_0(\theta F\cap F)$,  where $F$ is a fundamental
domain for the action of $\Gamma$, in the upper half plane $\mathbb{H}$. Here, the choice of
the fundamental domain corresponds to a choice of the $\Gamma$ - wandering
vector in $H_{13}$, which is unique up to a unitary in $\{ \pi_{13}(\Gamma)
\}^{\prime}\cong R(\Gamma) \cong {\mathcal{L}}(\Gamma)$). Recall that $\mu_0$ is
the canonical $PSL_2(\Bbb R)$ - invariant measure on the upper-half plane $\mathbb{%
H}$.

First, we note the properties of the positive definite function we are
identifying:

\vskip6pt

\begin{proposition}
Let $\Gamma \subseteq G$ be a discrete group and let $\pi$ be a
representation of $G$, on a Hilbert space H, such that $\pi|_{\Gamma}$ is
unitarily equivalent to the left regular representation of $\Gamma$ (Thus $H
\cong l^2(\Gamma)$) and there exists $\eta$ in $H$ such that $%
\pi(\gamma)\eta $ is orthogonal to $\eta$ for all $\gamma \neq e$ and $%
\overline{Sp\{\pi(\gamma)\eta | \gamma \in \Gamma\}} = H$.

Let $t(\theta) = \langle \pi(\theta)\eta, \eta \rangle$, $\theta \in G$.
Then $t(\theta)$ is a positive definite function on $G$.
 
In the setting introduced in Section 2, $t(\theta)$ is $\mathrm{Tr}(\pi(\theta)p_{\eta})$%
, where $p_{\eta}$ is the projection onto the 1-dimensional space generated
by $\eta$. Then $t$ has the following properties:

1) $t(\gamma) = \delta_{\gamma, e}$, $\gamma \in \Gamma$, where $e$
is the identity element of $G$ and $\delta$ is the Kronecker symbol.

2) $t$ is a positive definite function on $G$.

3) $\mathop{\sum}\limits_{\gamma \in \Gamma}|t(\gamma g)|^2 = 1$.

In addition $t^{\Gamma\sigma\Gamma} = \mathop{\sum}\limits_{\theta \in
\Gamma\sigma\Gamma} t(\theta)\rho(\theta)$ is a representation of ${\mathcal{%
H}}$ into $R(G)$ (this was proved in Section 2).

Note that if $\pi$ extends to a larger, continuous group $\overline{G}$
which contains $G$, then property (3) holds on $\overline{G}$ also.
\end{proposition}

\vskip6pt

\begin{proof}

The only fact that wasn't observed in Section 2 is property (3). But this follows from the fact that $\pi(g)\eta$ belongs to $\overline{S_p\{\pi(\gamma)\eta | \gamma \in \Gamma\}}$ and because $\pi(\gamma)\eta$, $\gamma \in \Gamma$, is an orthonormal basis.

\end{proof}

\vskip6pt

\begin{remark}
\label{phi} In the previous setting, if $\varphi_0(g) = |t(g)|^2 \geq 0$, $g
\in \overline{G}$ then $\varphi_0$ has exactly the behavior of a state of
the form $\mu(gF \cap F)$, $g \in \overline{G}$, where $\overline{G}$ acts
on a measure space $(X, \mu)$, by measure preserving transformations, and $F$ is a fundamental domain in $X$ for the action of $\Gamma$. Then,
the properties (1), (3) correspond to the fact that $F$ is a fundamental
domain for the action of  group $\Gamma$ on $X$ (and because of the implicit condition that $G(F) = \Gamma(F))$.

Note that the vector $\eta$ is not unique, in fact all others vectors with
similar properties to $\eta$, are of the form $u\eta$ where $u$ is a unitary
in $\{\pi(\Gamma)\}^{\prime\prime}\cong R(\Gamma)$.
Thus the other equivalent forms of the representation of the Hecke algebra ${%
\mathcal{H}}$ given by $\Gamma\sigma\Gamma\rightarrow t^{\Gamma\sigma\Gamma}$
are of the form $\Gamma\sigma\Gamma\rightarrow
u^{\ast}t^{\Gamma\sigma\Gamma}u$, $\sigma \in G$ for a unitaries $u$ in ${%
\mathcal{U}}(R(\Gamma))$.
\end{remark}

\vskip6pt

In the rest of the section $G = PGL_2(\mathbb{Z}[\frac{1}{p}])$, with $p$ a
prime number (or $G=PGL_2(\mathbb{Q})$), $\Gamma = PSL_2(\mathbb{Z})$ and $%
\overline{G} = PSL_2(\mathbb{R})$. The representation $\pi$ will be the
13-th element in the discrete series of $PSL(2, \mathbb{R)}$ (a projective,
unitary representation). The representation $\pi_{13}$ acts on the Bargman
Hilbert space $H_{13}$ (see [Be], [Ra] for definitions). Recall $H_{13} =
H^2(\mathbb{H}, ({\rm Im\ }z)^{13 -2} d\overline{z}dz)$. By $\mu_0$ we denote  the standard $G$ invariant measure on $\mathbb{H}$.

We identify $H_{13} \otimes \overline{H}_{13}$ with ${\mathcal{C}}_2(H_{13})$%
. A typical element of ${\mathcal{C}}_2(H_{13})$ is the Toeplitz operator $%
T_f^{13}$, which in the sequel we denote by $T_f$ simply, where $f$ is a
function of compact support in $\mathbb{H}$.

Recall that by (Be]), there exists a bounded operator $B \geq 0$, the
Berezin's transform operator, with zero kernel, commuting with $\pi(g)$, $g
\in \overline{G}$. Moreover,
$B$ is a function of the invariant Laplacian $\dfrac{\partial^2}{\partial
z\partial \overline{z}}({\rm Im\ } z)^{-2}$ such that 
\begin{equation*}
\mathrm{Tr}(T_fT_g^{\ast}) = \langle B^{-1}f, g \rangle_{L^2(\mathbb{H},
\mu_0)}, \ \ f,g\in L^2(\mathbb{H},
\mu_0).
\end{equation*}

Here we are  identifying the vector $\eta$ (or rather) $\eta \otimes 
\overline{\eta}$ in $H_{13} \otimes \overline{H}_{13}$ such that $$%
|t(\theta)|^2 = \langle (\pi_{13} \otimes \overline{\pi}_{13})\eta \otimes 
\overline{\eta}, \eta \otimes \overline{\eta} \rangle, \theta \in G.$$
In the identification $H_{13} \otimes \overline{H}_{13}$ with ${\mathcal{C}}%
_2(H_{13})$ (the Schatten von Neumann ideal of square summable operators on $%
H_{13}$) the representation $\pi_{13}$ becomes $\mathrm{Ad}\pi_{13}$ on ${%
\mathcal{C}}_2(H_{13})$, and $\eta \otimes \overline{\eta}$ becomes the
projection $p_{\eta}$ onto $\mathbb{C}\eta$. Then $$|t(\theta)|^2 = \mathrm{Tr%
}(p_{\eta}(\mathrm{Ad\pi_{13}(\theta)})(p_{\eta})),$$ while $$t(\theta) = 
\mathrm{Tr}(\pi_{13}(\theta)p_{\eta}), \theta \in G.$$

Let ${\mathcal{C}}_{13}$ be the closed selfdual cone in ${\mathcal{C}}%
_2(H_{13})$ generated by projections $p_{\zeta}$, $\zeta \in H_{13}$, (that
is ${\mathcal{C}}_{13}$ is the cone of positive elements). Then ${\mathcal{C}%
}_{13}$ is invariated by the representation $\mathrm{Ad}\pi_{13}$. \vskip6pt

\begin{remark}
Because of the above formula for the scalar product in ${\mathcal{C}}%
_2(H_{13})$ of two Toeplitz operators, we have that we have a canonical
unitary $U$ between $L^2(\mathbb{H}, \mu_0)$, and ${\mathcal{C}}_2(H_{13})$,
mapping functions $f$ in $L^2(\mathbb{H}, \mu_0)$ into the Toeplitz operator 
$T_{B^{-1/2}f}$. Moreover the unitary $U$ intertwines the corresponding
representations of $PSL(2,R)$ and hence the selfdual closed cone ${\mathcal{C%
}}_{13}$ corresponds to the positive functions in $L^2(\mathbb{H}, \mu_0)$.
\end{remark}

\vskip6pt

\begin{proof}

Indeed since $[\pi_{13}(g), B] = 0$ this is the only  selfdual cone invariated by ${\rm Ad}\pi_{13}(g)$, $g \in G$.

\end{proof}

\vskip6pt

\begin{proposition}
Let $F$ be a fundamental domain for the action of $\Gamma$ on $\mathbb{H}$.
Let $f_0 = B^{1/2}\chi_F$. Then $T_{f_0}$ is of the form $p_{\eta}$, where $%
\eta$ is a $\Gamma$-wandering vector in $H_{13}$.

In particular the positive definite functions $%
\varphi_0$, introduced in Remark \ref{phi}, are of the form 
\begin{equation*}
\varphi_0(g) = \mu_0(F \cap gF), \; g \in PSL_2(\mathbb{R}).
\end{equation*}
\end{proposition}

\vskip6pt

\begin{proof}

Indeed
$$
{\rm Tr}(\pi_{13}(\gamma)T_{f_0}\pi_{13}(\gamma)^{-1}T_{f_0}) = {\rm Tr}(T_{B^{1/2}\chi_{\gamma F}}T_{B^{1/2}\chi_F}) =
$$
$$
= \langle B^{-1}B^{1/2}\chi_{\gamma F}, B^{1/2}\chi_F \rangle = 0
$$

\noindent for $\gamma \in \Gamma \setminus \{e\}$.

Moreover $$\mathop{\sum}\limits_{\gamma\in \Gamma} \pi(\gamma)T_{f_0}\pi(\gamma)^{-1} = \sum\limits_{\gamma\in\Gamma} T_{B^{-1/2}\chi_{\gamma F}},$$ which is a multiple of the identity operator.

Thus ${\rm Tr}(T_{f_0})$ computes the trace either on $\{\pi_{13}(\Gamma)\}''$ and on $\{\pi_{13}(\Gamma)\}'$. Consequently it since $T_{f_0}$ is positive (by the previous remark), it follows that the operators
 $$\pi_{13}(\gamma)T_{f_0}\pi_{13}(\gamma)^{-1},\  \gamma \in \Gamma,$$ have orthogonal ranges. Since we have the Murray von Neumann dimension dim$_{\Gamma} H_{13}$ is equal to 1,  it follows that the operator ${\rm Tr}(T_{f_0})$ is of the form $p_{\eta}$, where $\eta$ a $\Gamma$-wandering and cyclic vector in $H_{13}$.

\end{proof}

We make the following conjecture. If, in general,  $G$ is a semi-simple Lie group, and $\Gamma$
a lattice in $G$, let  $K_{G,\Gamma}$ be the convex set of continuous positive
definite functions $\phi$ on $G$ with the following the properties

1) $\phi (g) \geq 0$, for all $g$ in $G$ and $\phi$ is positive definite,

2) $\sum_{\gamma\in \Gamma}\phi(\gamma g)=1$, all $g$ in G,

3) $\phi(\gamma) =\delta_{\gamma, e}$, for $\gamma \in \Gamma$.

We conjecture that the  extremal points of Ext $K_{G,\Gamma}$, are the positive definite functions of
the form $\phi_F(g)=\mu_G(gF\cap F)$, where $F$ is a fundamental domain for
the action of $\Gamma$ on $G$ and $\mu_G$ is the left Haar measure on $G$,
normalized so that $\mu_G(F)=1$.

If $G$ admits a unitary (projective) discrete series representation $\pi$ on
a Hilbert space $H$ such that $\pi|_{\Gamma}$ is unitarily equivalent to the
left regular representation of $\Gamma$, then we conjecture that Ext $%
K_{G,\Gamma}$ coincides with the positive definite function of the form 
\begin{equation*}
g\rightarrow |\langle \pi(g)\eta, \eta\rangle|^2, g\in G,
\end{equation*}
where the vectors  $\eta$ run over vectors in $H$ that are trace vectors for $%
\pi(\Gamma) $.

\vskip6pt

\begin{corollary}
\label{calc}

In the previous context, we have that for all $\theta \in G$, the following equalities hold true 
\begin{equation*}
\varphi_0(\theta) = |t(\theta)|^2 = \mathrm{Tr}(T_{f_0}\pi_{13}(%
\theta)T_{f_0}\pi_{13}(\theta)^{-1}) =
\end{equation*}
\begin{equation*}
= \langle B^{-1}B^{1/2}\chi_F, B^{1/2}\chi_{\theta F} \rangle_{L^2(\mathbb{H}%
, \mu_0)} = \mu_0(\theta F \cap F).
\end{equation*}

Also $$t(\theta) = \mathrm{tr}(\pi_{13}(\theta)T_{B^{1/2}\chi_F}), \ \ \theta
\in G.$$

\end{corollary}

\vskip6pt

We conjecture that $T_{B^{1/2}\chi_F}$ is the element $\zeta^{-1/2}T_{%
\chi_F}\zeta^{-1/2}$ where $\zeta$ is the square root of the positive
definite function on $\Gamma $ defined by the formula $\gamma\rightarrow 
\mathrm{Tr}(\pi_{13}(\gamma)\chi_F), \gamma \in \Gamma$ (this is the element 
$E_{R(G) \otimes 1}(p)$ from Proposition \ref{ep}). By $\zeta^{-1/2}$ we
mean, in fact, the operator $\pi_{13}(\zeta^{-1/2})$, the  image of  $\zeta^{-1/2}$ through, the
representation $\pi_{13}$.

\begin{corollary}
With the above notations, if $F$ is a fundamental domain for $\Gamma$ and $%
\phi_0(\theta)=\mu_0(\theta F \cap F), \theta \in G$, then $\phi_0$ defines
a positive definite function $\psi_0$ on ${\mathcal{K}}^\ast_0{\mathcal{K}}%
_0 $, by the formula 
\begin{equation*}
\psi_0 (\sigma_1\Gamma,\Gamma \sigma_2)= \sum _{\gamma \in
\Gamma}\mu_0(\sigma_1\gamma\sigma_2 F\cap F)=\mu_0(\sigma_1\Gamma\sigma_2
F\cap F) , \sigma_1, \sigma_2 \in G.
\end{equation*}

Here, being positive definite,  means that for all $k_1,k_2,...,k_n \in {\mathcal{K}}_0$ the
matrix 
\begin{equation*}
(\psi_0 (k^\ast_i, k_j))_{i,j=1,2,...,n}
\end{equation*}
is positive definite. Since $\psi_0|_{{\mathcal{H}}_0}$ is the character
corresponding to the identity on ${\mathcal{H}}_0$, it follows that $\psi_0$ is a
positive extension of this character to ${\mathcal{K}}^\ast_0{\mathcal{K}}_0$.

Note that $\psi_0 (\sigma_1\Gamma,\Gamma \sigma_2)$ may be interpreted as
the matrix coefficient of the representation of $\sigma_1$ on $L^2(\mathbb{%
H}, \mu_0)$ computed  at the $\Gamma$ equivariant vectors $\chi_{\sigma_2\Gamma F}$
and $\chi_{\Gamma F}=\chi_{\mathbb{H}}$. Here $\chi_F$ is a cyclic
vector for the representation of $G$ on $L^2(\mathbb{H}, \mu_0)$. Hence $\phi_0$ contains all the information about the representation of the group $G$,
including the action on $\Gamma$ invariant vectors.
\end{corollary}

\begin{proof} Proof because of Proposition \ref {calc}, we have, with the notations from ([Ra]), that  for
 all $\sigma_1,\sigma_2 \in G$,
$$\psi_0 (\sigma_1\Gamma,\Gamma \sigma_2)=
\sum_{\theta\in \sigma_1\Gamma \sigma_2}|t(\theta)|^2=
\tau ( (t^{\sigma_1\Gamma}t^{\Gamma \sigma_2})^{\ast}(t^{\sigma_1\Gamma}t^{\Gamma \sigma_2}).$$
Since $\tau$ is the trace on the group algebra of $G$, this is
further equal to
$$\tau (t^{\sigma_1^{-1}\Gamma\sigma_1} t^{\sigma_2^{-1}\Gamma\sigma_2}).$$

This implies the positivity of $\psi_0$

\end{proof}
\vskip6pt

Note that there exists a construction of Merel [Me] (we are indebted to Alex
Popa for bringing to our attention this paper), of a finitely supported
element $X$ in $\mathbb{C}(\Gamma\sigma_p\Gamma)$, such that $%
\Gamma\sigma_p\Gamma \to X$ extends to a representation of the Hecke algebra
($p$ is a prime number).

Unfortunately, for the element $X$ constructed in the paper ([Me]), the
property that the representation of the Hecke algebra may be extended to ${%
\mathcal{K}}_0^{\ast}{\mathcal{K}}_0$ does not hold true. The extension
property is, as proved in ([Ra]), the main ingredient that allows to
represent Hecke operators on Maass forms, by using the above representation
of the Hecke algebra (see also the construction in Theorem \ref{t}).

The following lemma gives a characterization of the elements $X$ in $%
l^2(\Gamma\sigma_p\Gamma) \cap {\mathcal{L}}(G)$, that generate a
representation of ${\mathcal{K}}^{\ast}{\mathcal{K}}$ into ${\mathcal{L}}(G)$%
, coming from a representation $\pi$ of $G$, which has the property that $%
\pi|_{\Gamma}$ is unitarily equivalent to the left regular representation of 
$\Gamma$.

\vskip6pt

\begin{proposition}
Let $X$ be a selfadjoint element in $l^2(\Gamma\sigma_p\Gamma)$. Let $\chi_1$
be the radial element in the group algebra of $F_{\frac{p+1}{2}}$ (the free
group with $\dfrac{p+1}{2}$ generators) (thus $\chi_n = \mathop{\sum}%
\limits_{w}w$, where $w$ runs over words of length $n$, $n \in \mathbb{N}$).
Let $\tau_p$ be the trace on ${\mathcal{L}}(F_{\frac{p+1}{2}})$.

(1) Then, the correspondence $(\Gamma\sigma_p\Gamma)^n \to X^n$ extends to a representation of
the Hecke algebra if and only if ${\tau}_{{\mathcal{L}}(G)}(X^n) =
\tau_p(\chi_1^n)$.

(2) Let $X_n$ be the projection of $X^n$ onto $l^2(\Gamma\sigma_{p^n}\Gamma)$%
. Let $X_{\Gamma\sigma_{p^e}s}$ (respectively $X_{s\sigma_{p^e}\Gamma}$),
for $s \in G$, $e \in \mathbb{N}$, be the projection of $X_e$ onto $%
l^2(\Gamma\sigma_{p^e}s)$ (respectively $l^2(s\sigma_{p^e}\Gamma)$).

Then, the element $X$ is associated to a  representation $\pi$ of $G$ on a Hilbert space $H$ (with $\mathrm{dim}%
_{\Gamma}H =1$) as in Section 2 (or [Ra]), if and only if $%
(X_{\Gamma\sigma})^{\ast} = (X_{\sigma\Gamma})$, $\sigma \in G$ and $%
[\sigma_1\Gamma][\Gamma\sigma_2] \to X_{\sigma_1\Gamma}X_{\Gamma\sigma_2}$, $%
\sigma_1, \sigma_2 \in G$ extends to a representation of ${\mathcal{K}}%
_0^{\ast}{\mathcal{K}}_0$. Being a representation of ${\mathcal{K}}%
_0^{\ast}{\mathcal{K}}_0$ means that  $X_{\sigma_1\Gamma}X_{\Gamma\sigma_2} =
X_{\sigma_1\Gamma\sigma_2}$, where $X_{\sigma_1\Gamma\sigma_2}$ is the
projection onto $l^2(\sigma_1\Gamma\sigma_2)$ of $X_{\sigma_1\Gamma}X_{%
\Gamma\sigma_2\Gamma}$ or of $X_{\Gamma\sigma_1\Gamma}X_{\Gamma\sigma_2}$.
(Equivalently this means that $X_{\sigma_1\Gamma\sigma_2}$ is the projection onto $%
l^2(\sigma_1\Gamma\sigma_2)$ of $\mathop{\sum}\limits_{e \in {\mathcal{I}}}
X_e$, where $\mathop{\bigcup}\limits_{e \in {\mathcal{I}}}
[\Gamma\sigma_{p^e}\Gamma]$ covers $\sigma_1\Gamma\sigma_2$; this should hold
true for all $\sigma_1, \sigma_2 \in G$).

If these conditions are verified, there $\Psi(a) = E_{{\mathcal{L}}%
(\Gamma)}^{{\mathcal{L}}(G)}(XaX)$, $a \in {\mathcal{L}}(\Gamma)$ defines a
representation of the Hecke algebra into the algebra consisting of
completely positive maps on ${\mathcal{L}}(\Gamma)$.

(3) The condition in (2) is verified if the following weaker condition holds
true 
\begin{equation*}
X_{\Gamma\sigma_1\Gamma}X_{\Gamma\sigma_2} = \mathop{\sum}\limits_{r_j}
X_{\Gamma\sigma_1r_j\sigma_2}
\end{equation*}

\noindent where $\Gamma\sigma_1\Gamma$ is the disjoint union of $%
\Gamma\sigma_1r_j$. This property should be verified for all $\sigma_1,
\sigma_2$. Thus to verify condition (2) it is sufficient to verify that $X$
(and $X_n, \; n \geq 1$) act on $Sp\{ X_{\Gamma\sigma} \mid \sigma \in G \}$
exactly as $[\Gamma\sigma_{p^n}\Gamma]$ acts on $l^2(\Gamma/G)$.
\end{proposition}

\vskip6pt

\begin{rem}
The condition (2) automatically implies that $(X_{\Gamma\sigma})^{\ast}X_{%
\Gamma\sigma}$ is equal to $X_{\sigma\Gamma\sigma}$, where only component in 
$\Gamma$ is $\tau_e$, and thus $\tau_{{\mathcal{L}}(G)}((X_{\Gamma\sigma})^{%
\ast}X_{\Gamma\sigma})$ is 1, thus $\| X_{\Gamma\sigma} \|^2_{2, {\mathcal{L}%
}(G)} = 1$.
\end{rem}

\vskip6pt

\begin{proof}
To prove (1) observe that the fact that $\tau_{\L(G)}(X^n) = \tau(\chi_1^n)$, for all $n \in \N$, is equivalent to the fact $X^n$ decomposes as a linear space of $\{ X_k \mid k \leq n \}$ exactly as $\chi_1^n$ decomposes as a linear space of $\{ \chi_k \mid k \leq n \}$ (i.e. with the same coefficients).

Hence $\chi_n \to X_n$ is a $\ast$ isomorphism but this implies that $\Gamma\sigma_{p^n}\Gamma \to X_n$ is a $\ast$ isomorphism, which because of orthogonality extends to the weak closure of the algebras involved.

To prove (2), we only have to prove the converse (since the direct implication was proved in [Ra], and reproved in Section 2). If condition (2) is verified, then one defines a representation $\pi$ of $G$ on $l^2(\Gamma)$ simply by defining $\pi(\sigma)1$ (where 1 is the identity of the group $\Gamma$, viewed as an element in $l^2(\Gamma)$) to be $\sigma(t^{\Gamma\sigma})^{\ast}$. This then, by requiring that $\pi|_{\Gamma} = \lambda_{\Gamma}$, defines the entire representation $\pi$.

The fact that (3) implies (2) is a simple consequence of supports computations. Indeed, if we take $\sigma_1, \sigma_2 \in G$ then if $\Gamma\sigma_1\Gamma = \cup s_i\sigma_1\Gamma$ (as a disjoint union), then the cosets $(s_i\sigma_1)\Gamma\sigma_2$ are disjoint and their union is the union of the cosets in $[\Gamma\sigma_1\Gamma][\Gamma\sigma_2]$.

\end{proof}

\end{document}

\end